\numberwithin{equation}{section}
\theoremstyle{plain}
\newtheorem{theorem}{Theorem}[section]
\newtheorem{proposition}[theorem]{Proposition}
\newtheorem{problem}[theorem]{Problem}
\newtheorem{definition}[theorem]{Definition}
\newtheorem{example}[theorem]{Example}
\newtheorem{remark}[theorem]{Remark}
\newenvironment{proof}{{\noindent \textbf{Proof}\,\,}}{\hspace*{\fill}$\Box$\medskip}
\def\cc{\mathbb C}
\def\la{\lambda}
\def\oc{\overline{\cc}}
\def\cp{\mathbb{CP}}
\def\wt#1{\widetilde#1}
\def\rr{\mathbb R}
\def\nn{\mathbb N}
 \def\mct{\mathcal T}
 \def\mcb{\mathcal B}
 \def\rp{\mathbb{RP}}
 \def\mcb{\mathcal B}
\def\La{\Lambda}
\def\ii{\mathbb I}
\def\mca{\mathcal A}
  \def\diag{\operatorname{diag}}
 \def\la{\lambda}
\def\mcc{\mathcal C}
\def\mcq{\mathcal Q}
\def\mcn{\mathcal N}
\def\Xi{\mathcal Z}
\title{On complex algebraic caustics in planar and projective billiards}
\author{Alexey Glutsyuk\thanks{Higher School of Modern Mathematics MIPT, Moscow, Russia}
\thanks{HSE University, Moscow, Russia}
\thanks{CNRS, UMR 5669 (UMPA, ENS de Lyon), Lyon, France}
\thanks{Results of the project No 075-00648-25-00 "Symmetry. Information. Chaos.", carried out within the framework of the Basic Research Program at HSE University in 2025 , are presented in this work.}}
\begin{document}
 \maketitle
 \vspace{-0.3cm}
 \centerline{\it Dedicated to the memory of Alexey Vladimirovich Borisov}
\begin{abstract}   A {\it caustic} of a billiard is a curve whose tangent lines are reflected to its own tangent lines.  A billiard is called {\it Birkhoff caustic-integrable}, if there exists a topological annulus adjacent to its boundary from inside that is foliated by 
closed caustics. The famous Birkhoff Conjecture, studied by many mathematicians, states that {\it the only Birkhoff caustic-integrable billiards are ellipses.} The conjecture is open even for billiards whose boundaries are ovals of algebraic curves. In this case the billiard is known to have a dense family of so-called rational caustics that are also ovals of algebraic curves. 
We introduce the notion of a 
{\it complex caustic:} a complex algebraic curve whose {\it complex} tangent lines are sent by complexified reflection to its own 
complex tangent lines. 
We show that the usual billiard on a real planar curve $\gamma$ has a complex  caustic, if and only if $\gamma$ is a conic. We prove analogous result for billiards on all the surfaces of constant curvature. These results are corollaries of the solution of S.Bolotin's polynomial integrability conjecture: a joint result by M.Bialy, A.Mironov and the author. We extend them 
to the projective billiards introduced by S.Tabachnikov, which are a common generalization of billiards on surfaces of constant curvature. We also deal with a well-known class of projective billiards on conics that are defined to have caustics forming a dual conical pencil. We show that up to restriction to a finite union of arcs, each of them is equivalent to a billiard on appropriate surface of constant curvature. 
  \end{abstract}
  \tableofcontents
  \section{Introduction}
 \subsection{Main result: curves with complex caustics are conics. Plan of the paper}
   Consider a domain $\Omega\subset\rr^2$ bounded by a strictly convex closed smooth curve $C=\partial\Omega$. 
   The billiard reflection acts on the 
oriented lines intersecting $C$. Namely,  an oriented line $L$ is reflected from $C$ at its last intersection point with $C$  according to the standard reflection law: the angle of incidence is equal to the angle of reflection; the reflected line is directed inside  $\Omega$.   Recall that a {\it caustic} of the billiard is a curve 
$\alpha$ such that each tangent line to $\alpha$ is reflected to 
a tangent line to $\alpha$. The billiard  is called 
{\it Birkhoff caustic-integrable,} if  a topological annulus adjacent to $C$ from inside is foliated by closed caustics, and $C$ is a leaf of this foliation.  
 It is well-known that each elliptic billiard is integrable:  ellipses confocal to the boundary are caustics, 
 see \cite[section 4]{tab}. 
 The {\bf Birkhoff Conjecture}  states the converse: {\it the only Birkhoff caustic-integrable 
 convex bounded planar 
 billiards  are  ellipses.} It was studied by many mathematicians including H.Poritsky, 
 M.Bialy,  S.Bolotin, V.Kaloshin, A.Sorrentino, A.Mironov, I.Koval, the author and others.  See its brief survey  in  Subsection 1.7. 
 
Recall that the {\it billiard flow} acts on  $T\rr^2|_\Omega$ as follows. A point $(Q,v)\in T\rr^2|_\Omega$, 
 $Q\in\Omega$, $v\in T_Q\rr^2$, is moved by the geodesic flow:  $Q$ moves with constant velocity $v$, until $Q$ hits  $C$. Then  $v$ is mirror-reflected from 
 the tangent line $T_Q C$ to a new vector $v^*\in T_Q\rr^2$ directed inside $\Omega$, $|v^*|=|v|$. Afterwards  
 $(Q,v^*)$ is moved  by the geodesic flow etc. The billiard flow has  trivial first integral $|v|^2$. Birkhoff integrability of the billiard in $\Omega$ is equivalent to 
 the {\it Liouville integrability of the billiard flow:} to existence of an additional first integral independent with $|v|^2$  on  the intersection with $T\rr^2|_{\overline\Omega}$ of a neighborhood of the unit tangent bundle to the boundary. This is a well-known folklore fact. 
 
  S.V.Bolotin \cite{bolotin, bolotin2} suggested the following {\bf polynomial version of the Birkhoff Conjecture}.  
 {\it Let  the billiard flow in the domain bounded by a $C^2$-smooth closed curve $C$ have a non-trivial first integral polynomial in the velocity:} in this case the billiard is called {\it polynomially integrable.} Then {\it $C$ is an ellipse.} He stated it in more generality, 
 for general, not necessarily closed or convex $C^2$-smooth curves $C$ 
 on  all the surfaces of constant curvature:  plane, round sphere and hyperbolic plane. 
 The solution of Bolotin's Conjecture  is a joint result of M.Bialy, A.Mironov and the author \cite{bm, bm2, gl2}. 
  
 It is well-known that Birkhoff integrability implies existence of a dense family of the so-called {\it rational caustics:} those caustics that are tangent to one-dimensional families of periodic orbits. 

 The Birkhoff Conjecture is open for billiards with  boundary being an oval of algebraic curve. It is known that in this case the rational caustics are ovals of algebraic curves. Moreover, to the author's knowledge, the following problem is also open.
 
\begin{problem} \label{pb1} Does there exist a pair of two nested ovals  of planar algebraic curves, distinct from confocal ellipses, such that the smaller one is a caustic for the bigger one?
\end{problem}

In the present paper we prove  positive answer under a stronger assumption saying that the complexification of the 
caustic is a complex caustic of the real billiard, see the next definition and theorem.

Let $C\subset\rr^2\subset\cc^2\subset\cp^2$ be  a 
$C^1$-smooth planar curve. For every point $Q\in C$ consider the reflection involution 
$\rp^1\to\rp^1$ acting on  lines through $Q$: the reflection from the tangent line $T_QC$. It extends to a complex projective involution $\cp^1\to\cp^1$ acting on  complex lines through $Q$. 
\begin{definition} Let $C$ be as above. 
Let $\alpha\subset\cp^2$ be a complex algebraic curve. 
The curve $\alpha$ is said to be a {\it complex caustic} of the billiard on $\gamma$, if for 
every $Q\in C$ every complex tangent line to $\alpha$ through $Q$ is reflected as above 
to a complex tangent line to $\alpha$. 
\end{definition}

\begin{theorem} \label{talg1} Let $C\subset\rr^2$ be a nonlinear $C^2$-smooth connected embedded curve, not necessarily closed, convex or algebraic.  Let $C$ 
have a  complex caustic $\alpha$. Then $C$ is a conic, and $\alpha$ is either a confocal conic to $C$, or a finite 
union of confocal conics. 
\end{theorem}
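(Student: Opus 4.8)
The plan is to reduce Theorem \ref{talg1} to the solution of S.Bolotin's polynomial integrability conjecture \cite{bm, bm2, gl2}: I will show that a complex caustic produces a first integral of the billiard flow that is polynomial in the velocity, deduce from Bolotin's conjecture that $C$ is a conic, and then identify $\alpha$ using the known structure of such integrals for billiards on conics. The whole point of passing from ``real'' to ``complex'' caustics is that the complex tangency condition is \emph{algebraic}, so the dual curve of $\alpha$ is defined by a single homogeneous polynomial, and this polynomial can be fed directly into a velocity polynomial.

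First I would build the candidate integral. For a point $Q=(x,y)\in\rr^2$ and a velocity $v=(v_1,v_2)$, the complex line through $Q$ directed by $v$ has homogeneous coordinates $[-v_2:v_1:v_2x-v_1y]$ in the dual plane $(\cp^2)^*$. Let $\Phi$ be a homogeneous polynomial defining the dual curve $\alpha^*\subset(\cp^2)^*$, of degree equal to the class $m$ of $\alpha$, and set
\[
F(Q,v):=\Phi(-v_2,\,v_1,\,v_2x-v_1y),
\]
which is homogeneous of degree $m$ in $v$ with polynomial coefficients in $(x,y)$. Replacing $(x,y)$ by $(x+tv_1,y+tv_2)$ leaves the three arguments of $\Phi$ unchanged, so $F$ is automatically a first integral of the free (geodesic) motion. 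At a reflection point $Q\in C$ the velocity is transformed by the orthogonal reflection $R_Q$ across $T_QC$, with $\det R_Q=-1$ and $R_Q^2=\mathrm{Id}$; its complexification is exactly the involution $\sigma_Q$ acting on the pencil of complex lines through $Q$. The complex caustic condition says that $\sigma_Q$ preserves the root divisor of the binary form $v\mapsto F(Q,v)$, whose roots are the complex tangent lines to $\alpha$ through $Q$. Since two binary forms of the same degree with the same root divisor are proportional, and $R_Q$ is an involution in $\mathrm{GL}_2$, one gets $F(Q,R_Qv)=\epsilon_Q\,F(Q,v)$ with $\epsilon_Q^2=1$; by continuity and connectedness of $C$ the sign $\epsilon_Q\equiv\epsilon\in\{\pm1\}$ is constant. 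Hence $F$ (if $\epsilon=1$) or $F^2$ is a genuine first integral of the billiard flow, polynomial in $v$. It is nontrivial, i.e.\ not a polynomial in $|v|^2$, unless $\Phi$ is independent of its third coordinate $v_2x-v_1y$, which forces all tangent lines of $\alpha$ to be concurrent; this degenerate case is incompatible with $\alpha$ being a caustic of a nonlinear $C^2$ curve and is discarded separately.

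With a nontrivial polynomial integral in hand, the solution of Bolotin's conjecture \cite{bm, bm2, gl2} in its general form (for not necessarily closed or convex $C^2$-smooth curves) yields immediately that $C$ is an arc of a conic. It then remains to identify $\alpha$. For a conic $C$ the billiard is classically integrable: the Joachimsthal integral $\mathcal J$, quadratic in $v$, has level sets $\{\mathcal J=\la\,|v|^2\}$ equal precisely to the sets of complex lines tangent to the confocal conic $C_\la$, and every polynomial-in-$v$ first integral is a polynomial in $\mathcal J$ and $|v|^2$. Expressing the integral produced above in terms of $\mathcal J$ and $|v|^2$ and factoring, the zero locus $\alpha^*=\{\Phi=0\}$ becomes $\prod_i(\mathcal J-\la_i|v|^2)=0$, i.e.\ the union of the tangent-line sets of finitely many confocal conics $C_{\la_i}$. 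Dualizing back, $\alpha=\bigcup_i C_{\la_i}$ is a finite union of conics confocal to $C$, and a single irreducible $\alpha$ is one confocal conic, as asserted.

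I expect the main obstacle to lie in the final identification step rather than in the reduction: one must control possible extra components of $\alpha^*$ supported at infinity or along the isotropic (circular) directions, verify that the correspondence between $\{F=0\}$ and tangent sets of confocal conics is exact, and rule out the degenerate caustics for which $F$ collapses to a power of $|v|^2$. The passage from $F$ to $F^2$ and the precise bookkeeping of the sign $\epsilon$ and the class $m$ are routine once the proportionality argument on each pencil is set up carefully; the delicate analysis is the description of all complex caustics of a conic.
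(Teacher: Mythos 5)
Your proposal is correct in outline and reaches the theorem by a genuinely more direct route than the paper. The paper never works with a velocity-polynomial integral on the primal side: it dualizes the billiard via the orthogonal polarity to the Bialy--Mironov angular billiard on $\gamma=C^*$, shows (Proposition \ref{pinvlp}) that $R(M)=H^2(M)/(M_1^2+M_2^2)^d$ is a \emph{rational} integral of the dual billiard (by the same divisor-proportionality argument on each pencil line that you run on binary forms, including the same trick of evaluating at a fixed point of the involution to pin down the constant), and then passes through two external results -- Proposition \ref{procriter} from \cite{grat} (rational integrability of the dual billiard $\Leftrightarrow$ rational $0$-homogeneous integrability of the billiard) and Proposition \ref{ratpol} from \cite{gl3} (for planar billiards the latter $\Leftrightarrow$ polynomial integrability) -- before invoking the solved Bolotin conjecture. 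You instead compose the defining polynomial $\Phi$ of $\alpha^*$ directly with the moment vector $M=(-v_2,v_1,\Delta)$ and get a polynomial integral $F=\Phi(M)$ in one stroke; this shortcut is legitimate and is in fact acknowledged in the paper (``a direct deduction of polynomial integrability, without using results of \cite{grat, gl3}, is presented in the discussion on p.~1004 in \cite{gl2}''). What the paper's detour buys is that the same duality machinery handles Theorems \ref{talg2} and \ref{talg3} uniformly, which your primal construction would not immediately do. Two caveats on your write-up. First, your proportionality argument needs the root \emph{divisor}, not just the root set, to be $\sigma_Q$-invariant; this holds at generic $Q$ (simple roots, after treating the possible double root at the $\sigma_Q$-fixed tangent direction when $C\subset\alpha$), and the integral identity then extends by continuity -- the paper's proof glosses this at exactly the same level, so it is not a defect relative to the target. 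Also, if $\Phi$ has non-real coefficients then $F$ is complex-valued; replace it by $\Phi\bar\Phi(M)$ (the conjugate curve $\bar\alpha^*$ is dual to a caustic too) to feed a real integral into Bolotin's theorem. Second, and more substantively, your final identification rests on the asserted fact that every polynomial-in-$v$ integral of a conic billiard is a polynomial in $\mathcal J$ and $|v|^2$; this is true but not free -- it is precisely the point where the paper invokes Desargues' theorem for the pencil through $\gamma$ and $\ii$ together with \cite[proposition 2.7]{gl3} (constancy of any rational integral on irreducible components of level curves of the quadratic integral, plus irreducibility of the generic level curve), so you should either cite these or prove the irreducibility of the generic complexified level set $\{\mathcal J=\la|v|^2\}$ yourself. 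You correctly flag this as the delicate step, and your handling of the degenerate case ($\Phi$ independent of $M_3$, forcing a trivial integral $c|v|^{2k}$ and incompatibility with nonlinear $C$) is sound.
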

 \begin{remark} The curve $C$ is a real caustic for itself: through each its point $Q$ passes the tangent line to $C$, and it is fixed by the reflection at $Q$. If $C$ is a conic, then its complexification $C_\cc$ is a complex  
caustic for $C$ for the same reason. But  if $C$ is a higher degree algebraic curve, then 
a priori its complexification $C_\cc$ is not necessarily a complex caustic. Namely, for a generic point $P\in C_\cc$ besides the line $L_P$ tangent to $C_\cc$ at $P$ there is at least one more other line through $P$ tangent to $C_\cc$ at some other point.  
To check, whether $C_\cc$ is a complex caustic for the curve $C$, one has to check 
whether for every $Q\in C$ the collection of all the complex lines through $Q$ tangent to $C_\cc$ is invariant under the reflection at $Q$. This is a non-trivial condition on the algebraic curve $C$. 
\end{remark}
\begin{remark} A priori if a real algebraic curve $\alpha$ is a caustic for a closed curve $C$, it is not true that its complexification is a complex caustic for $C$. Indeed, let $\alpha$ be a real oval of an algebraic curve. The string construction, see \cite[p.73]{tab},   yields a family of nested strictly convex closed curves $C_t$, $t\geq0$, $C_0=\alpha$, such that $\alpha$ is a real caustic for each $C_t$. If $\alpha$ is not an ellipse, then  the  curve $C_t$ is not an ellipse for generic $t$. Then the complexification of the curve $\alpha$ is not its complex caustic, by Theorem \ref{talg1}. The question of Problem \ref{pb1} asks whether the same can happen for $\alpha$ and $C_t$ being both  ovals of algebraic curves.
\end{remark}

In what follows for an algebraic curve $\alpha\subset\cp^2$ by $\wt\alpha$ we denote the compact Riemann surface  parametrizing $\alpha$ holomorphically, bijectively except for self-intersections. Every point $A\in\wt\alpha$ is identified with the corresponding point of the algebraic curve $\alpha$. For every $A,B\in\wt\alpha$ by $AB$ we denote the line through the 
corresponding points of the algebraic curve $\alpha$. 
\begin{definition} Let $C,\alpha\subset\cp^2$ be two projective algebraic curves.   Recall that the {\it tangential correspondence} is the algebraic subset $\mct\subset\wt C\times\wt\alpha$ consisting of those points $(A,B)$ for which the line $AB$ is tangent to 
$\alpha$ at $B$. In the special case, when $C=\alpha$, the tangential correspondence always contains the diagonal 
$\{(A,A) \ | \ A\in C\}$ as an irreducible component. In this case the union of its non-diagonal irreducible components will be denoted by $\mct$ and 
called the {\it non-diagonal tangential correspondence.}
\end{definition}
In what follows for a real curve $\alpha$ its complexification, i.e., its complex Zariski closure in $\cp^2$, will be denoted by the 
same symbol $\alpha$. 
\begin{proposition} \label{prtang} Let $C,\alpha\subset\rr^2\subset\cc^2\subset\cp^2$ be nested strictly convex ovals of algebraic curves, $C$ be the bigger one. Let $\alpha$ be a caustic for the billiard inside the curve $C$. Let $\wt C$, $\wt\alpha$ be normalizations of 
their complexifications. Let the tangential correspondence in $\wt C\times\wt\alpha$ be irreducible. In the case, when 
 $C$ and $\alpha$ have the same complex Zariski closure, we require irreducibility of the non-diagonal tangential correspondence.  Then $\alpha$ is a complex 
caustic for the billiard on $C$, and hence, $C$ and $\alpha$ are confocal ellipses.
\end{proposition}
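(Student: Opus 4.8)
The plan is to upgrade the real caustic property to the full complex caustic property using irreducibility of the tangential correspondence, and then invoke Theorem \ref{talg1}.

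First I would recast the complex caustic condition as a Zariski-closed condition on $\mct$. For a point $A\in\wt C$ the projective tangent line $T_AC$, and hence the complex reflection involution $\rho_A$ of the pencil of lines through $A$ (the involution introduced just before the definition of complex caustic), depend algebraically on $A$, since the Gauss map of $C$ and the circular points $I,J$ are algebraic data and $\rho_A$ is the projective involution fixing $T_AC$ and interchanging $AI$ and $AJ$. Consequently the assignment $r\colon\mct\to(\cp^2)^*$, $(A,B)\mapsto\rho_A(AB)$, sending a tangency pair to its reflected line, is a rational map into the dual projective plane, defined off a finite set (singular points of $C$, flexes, and points where $T_AC$ meets a circular point). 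Let $\alpha^*\subset(\cp^2)^*$ denote the dual curve, i.e. the locus of complex lines tangent to $\alpha$. Then $\alpha$ being a complex caustic for $C$ means exactly that $r(A,B)\in\alpha^*$ for every real $A\in C$ and every $B$ in the fibre $\mct_A$; in particular it is implied by the purely algebraic statement that the Zariski-closed set $W:=\overline{r^{-1}(\alpha^*)}\subseteq\mct$ equals all of $\mct$.

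Next I would produce a real arc inside $W$. Fixing a smooth arc of the oval $C$, for each real $A$ on it let $B(A)$ be one of the two real tangency points of the tangent lines drawn from $A$ to the oval $\alpha$; then $(A,B(A))\in\mct$ and the line $AB(A)$ is a real tangent line to $\alpha$. The real caustic hypothesis says precisely that the billiard reflection $\rho_A(AB(A))$ is again tangent to the oval $\alpha$, i.e. $r(A,B(A))\in\alpha^*$. Hence $\Gamma:=\{(A,B(A))\}$ is an infinite real-analytic arc contained in $W$. When $C$ and $\alpha$ have the same complex Zariski closure, these tangency pairs are non-diagonal, so $\Gamma$ lies in the non-diagonal tangential correspondence.

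Now irreducibility does the work. Since $\mct$ (respectively its non-diagonal part) is an irreducible complex algebraic curve, its only proper Zariski-closed subsets are finite, whereas $\Gamma$ is infinite; therefore the Zariski-closed set $W\supseteq\Gamma$ is all of $\mct$. Thus $r(A,B)\in\alpha^*$ for every $(A,B)\in\mct$ where $r$ is defined, and by continuity for the finitely many remaining real points as well. Reading this over the fibre $\mct_A$ of a real point $A\in C$ — which parametrizes all complex tangent lines from $A$ to $\alpha$ — shows that reflection at $A$ carries every such complex tangent line to a complex tangent line of $\alpha$. That is exactly the complex caustic property, and Theorem \ref{talg1} then forces $C$ and $\alpha$ to be confocal ellipses.

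The main obstacle I expect is the algebraicity and good definition of the reflection map $r$ together with the Zariski density of $\Gamma$ in the correct component: one must verify that $r$ extends to a morphism on a dense open subset of $\mct$, control the excluded locus (singularities, flexes, circular points) so that passing to Zariski closure is harmless, and confirm that the real tangency arc $\Gamma$ genuinely lies on the irreducible (non-diagonal) component rather than splitting among several branches — which is exactly where the irreducibility hypothesis becomes indispensable.
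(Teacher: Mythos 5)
Your proposal is correct and takes essentially the same route as the paper's proof: both identify the locus of pairs $(A,B)$ in the (non-diagonal) tangential correspondence where the reflection at $A$ sends $AB$ to a line tangent to $\alpha$ as a closed analytic/algebraic subset, observe that it contains the infinite real family of real tangency pairs supplied by the real caustic hypothesis, conclude by irreducibility of the one-dimensional correspondence that the locus is everything, and then apply Theorem \ref{talg1}. The only cosmetic difference is that you package the argument via a rational reflection map $r$ into the dual plane and Zariski closure, while the paper works directly with the analytic subset $\mct_0\subset\mct$ after puncturing the finitely many isotropic tangency points --- the same excluded locus you describe.
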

Proposition \ref{prtang} is proved in Subsection 2.4.

\begin{problem} Is it true that if the billiard inside a strictly convex oval $C$ of a real algebraic curve is Birkhoff integrable, then 
the complexification of at least one its rational caustic is a complex caustic?
\end{problem}
Positive answer would imply the statement of the Birkhoff Conjecture for ovals of algebraic curves.

In the next subsection we state Theorem \ref{talg2}, which generalizes  Theorem \ref{talg1} for billiards on surfaces of constant curvature. 

In 1997 S.Tabachnikov \cite{tabpr} introduced projective billiards, which generalize billiards on surfaces of constant curvature. 
He suggested a generalized version of the Birkhoff Conjecture for projective billiards and for their projective-dual objects: the dual billiards \cite{tab08}. See the next two definitions and the corresponding background material in Subsections 1.3 and 1.4.

\begin{definition} \cite{tabpr} A {\it planar projective billiard} is a $C^1$-smooth curve $C\subset\rr^2$ equipped with a transversal line field 
$\mcn$. For every $Q\in C$ the {\it projective billiard reflection involution} at $Q$ acts on the space 
 $\rp^1$ of lines through $Q$. Namely, it permutes two lines $a$, $b$ through $Q$, if  and only if 
 the quadruple of lines $T_QC$, $\mcn(Q)$, $a$, $b$ is harmonic. The above reflection  is the unique projective involution that fixes $T_QC$, $\mcn(Q)$ and permutes $a$, $b$.
 \end{definition}
  \begin{remark} \label{reminv} The above involution is the projectivization of a linear involution $\mathbf I_Q:T_Q\rr^2\to T_Q\rr^2$ having eigenlines $T_QC$, $\mcn(Q)$ with eigenvalues 1 and -1 respectively. It defines the projective reflection law at $Q$: an incoming vector 
  $v_{in}\in T_Q\rr^2$ is reflected to outcoming vector $v_{out}=\mathbf I_Q(v_{in})$. 
 For  a projective billiard  on a closed and strictly convex curve $C$  the {\it projective billiard map} \cite{tabpr} acts on the {\it phase cylinder:} 
 the space of {\it oriented} lines intersecting $C$. It sends an oriented line to its image under the above reflection  at its last intersection point  with $C$ in the sense of orientation. See Fig. 1.
 \end{remark}
 \begin{figure}[ht]
  \begin{center}
  \vspace{-1.4cm}
   \epsfig{file=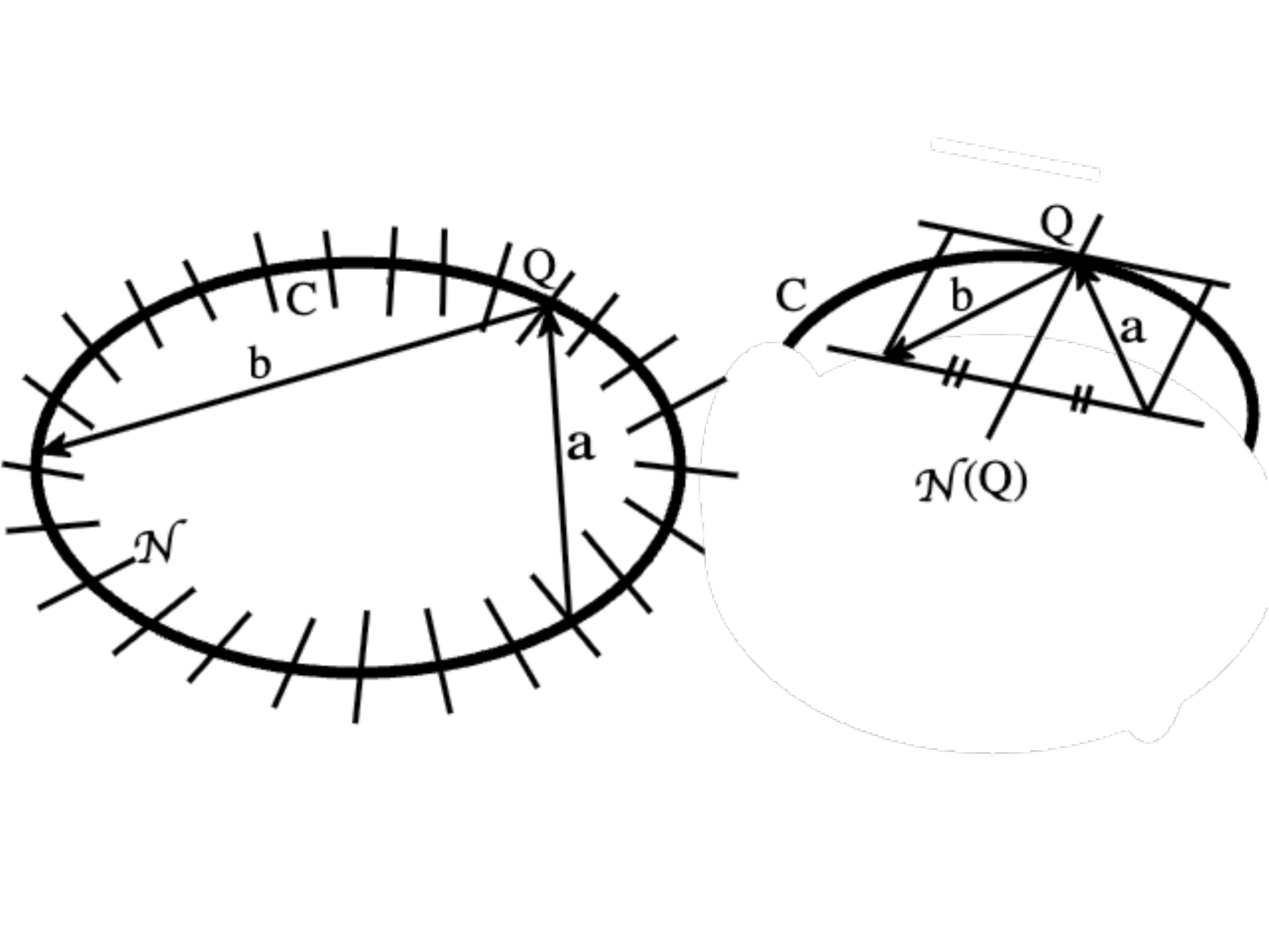, width=22em}
   \vspace{-1.5cm}
    \caption{The projective billiard reflection.}
    \vspace{-0.5cm}
    \label{fig:0}
  \end{center}
\end{figure}
A usual planar billiard with reflections from a curve $C$  is a particular case of projective billiard, with $\mcn$ being the normal line field. For representation of billiards on other surfaces of constant curvature as projective billiards see Subsection 1.3

In Subsection 1.5 we state Theorem \ref{talg3}, which extends Theorems \ref{talg1} and \ref{talg2} to projective billiards 
with two complex caustics. It states that then the underlying curve $C$ is a conic and the projective billiard flow is rationally 0-homogeneously integrable: has a first integral that is a rational 0-homogeneous function of the velocity of uniformly bounded degree. Rationally 0-homogeneously integrable projective billiards on non-linear connected $C^4$-smooth curves exist only on conics, 
 and they are classified up to projective transformations in \cite{grat}. This implies that the projective billiard in question belongs 
 to the list given by the latter classification. 
 
The standard class of rationally integrable projective billiards are the so-called called {\it dual pencil type} projective billiards, 
see Example \ref{defdp}. 
Each of them is the  projective billiard structure on a conic $C$ defined to have a prescribed conical caustic $\alpha\neq C$. 
In Subsection 1.6 we state  Proposition \ref{ppencil}, which says that up to projective transformation and restriction to a finite union of arcs, each dual pencil type projective billiard  is equivalent to a billiard on a surface of constant curvature.
 
 To outline the proofs, let us recall the two following definitions.
\begin{definition} \label{defdual} Let $\gamma\subset\rp^2$ be a planar curve. For every point $P\in\gamma$ by $L_P$ we denote the projective tangent line to $\gamma$ at $P$.  A  planar {\it dual billiard} is a planar curve $\gamma$ equipped with a family 
$(\sigma_P)|_{P\in\gamma}$ 
of projective involutions of the tangent lines $L_P$ that fix $P$.  
\end{definition}
\begin{definition} A planar dual billiard is {\it rationally integrable,} if there exists a non-constant rational function $R$ of two variables  whose restriction to each tangent line $L_P$ is $\sigma_P$-invariant: $(R\circ\sigma_P)|_{L_P}=R|_{L_P}$.
\end{definition}

 {\bf Sketch of proof of Theorem \ref{talg1}.}
Projective duality given by the orthogonal polarity, see the beginning of Subsection 1.4,  transforms the curve $C$ to its dual curve $\gamma=C^*$. 
The usual billiard with reflections from the curve $C$ is transformed to a special dual billiard on the curve $\gamma$: 
the Bialy--Mironov angular billiard in $\rp^2_{[M_1:M_2:M_3]}$, see Subsection 1.4. 
The dual to a complex caustic $\alpha$ is  an {\it invariant curve} $\alpha^*\subset\cp^2$ of the dual billiard, which means that 
$\sigma_P$ permutes the points of intersection $\alpha^*\cap L_P$ for every $P\in\gamma$.  For the proof of Theorem \ref{talg1} we show that 
appropriate rational function having zero locus $\alpha^*$ and polar locus the absolute $\{ M_1^2+M_2^2=0\}$ is an integral 
of the dual billiard. It takes the form 
$R(M_1,M_2)=\frac{H(M_1,M_2)}{(M_1^2+M_2^2)^d}$, where $H$ is a degree $2d$ homogeneous polynomial vanishing 
exactly on $\alpha^*$. 
Rational integrability of the dual billiard is equivalent to rational 0-homogeneous integrability of the initial billiard:  
see \cite[proposition 1.27]{grat} recalled below as Proposition \ref{procriter}. For usual planar billiards the latter is equivalent to polynomial integrability, see
 \cite[proposition 1.8]{gl3}. This implies that the  billiard inside $C$ is polynomially integrable. A direct deduction of polynomial integrability, without using results of \cite{grat, gl3}, is presented in 
the discussion on p. 1004 in \cite{gl2}.  
Polynomial integrability    implies that $C$ is a conic, by the solution 
of the Bolotin's  Conjecture \cite{bm, bm2, gl2}. Afterwards we prove that $\alpha$ is a finite union of conics confocal to $C$. 
This will prove Theorem \ref{talg1}. 

Proofs of  generalizations, i.e.,   Theorems \ref{talg2}, \ref{talg3},   are  analogous: the same projective duality transforms projective billiards to dual billiards and preserves rational (0-homogeneous) integrability, by \cite[proposition 1.27]{grat}.


\subsection{Extension to billiards on surfaces of  constant curvature}  
We rescale the metric on the surface in question by constant factor so that the curvature is equal to either $0$, or $\pm1$. It is well-known that then the surface can be realized as a surface $\Sigma\subset\rr^3_{x_1,x_2,x_3}$ equipped with appropriate quadratic form $<\mca dx,dx>$; here $\mca$ is appropriate real symmetric 3$\times$3-matrix and $<\ , \ >$ is the Euclidean scalar product. The metric of the surface coincides with the restriction of the quadratic form to its tangent planes. 

1) The Euclidean plane: $\Sigma=\rr^2=\{ x_3=1\}$, $<\mca dx,dx>=dx_1^2+dx_2^2$.

2) The sphere: $\Sigma=\{ x_1^2+x_2^2+x_3^2=1\}$, $\rr^3$ is Euclidean, i.e., $\mca=Id$.

3) The hyperbolic plane: $\Sigma=\{ x_3^2-x_1^2-x_2^2=1\}\cap\{ x_3>0\}$, $\rr^3$ is Minkowski, i.e., 
$<\mca dx, dx>=dx_1^2+dx_2^2-dx_3^2$.

Recall that the geodesics of the metric on $\Sigma$ are exactly its intersections with two-dimensional vector subspaces in 
$\rr^3$. Consider the restriction to $\Sigma$ of the tautological projection $\pi:\rr^3\setminus\{0\}\to\rp^2_{[x_1:x_2:x_3]}\subset\cp^2$. The geodesics are sent to projective lines. 

Let $C\subset\Sigma$ be a $C^1$-smooth curve, $Q\in C$. The billiard reflection involution from the curve $C$ at $Q$ acts on the 
space $\rp^1$ of geodesics through $Q$ in $\Sigma$. The latter space is identified with the projectivization of the tangent plane 
$T_Q\Sigma$. It  is identified by the projection $\pi$ with the space of projective lines through $\pi(Q)$, which is also identified with $\rp^1$. The projection $\pi$ conjugates the above billiard 
reflection involution to a projective involution acting on the lines through $\pi(Q)$. Its complexification is a projective involution 
$\cp^2\to\cp^2$ of the space of complex lines through $\pi(Q)$. It is called the {\it complex reflection} from $\pi(C)$ at $\pi(Q)$. 
 
\begin{definition} Let $\Sigma$ and $C$ be  as above.  Let $\alpha\subset\cp^2$ be a complex algebraic curve. 
The curve $\alpha$ is a {\it complex caustic} for the curve $C$, if for every point $Q\in C$ every complex line $L$ through 
$\pi(Q)$ tangent to $\alpha$ is send by the above complex reflection to a line tangent to $\alpha$.
\end{definition}
 Recall that the {\it absolute} $\mathbb I\subset\cp^2$  of a surface $\Sigma$ as above is the light curve, the zero locus of the corresponding quadrartic form: 

Case 1) of plane: $\mathbb I=\{ x_1^2+x_2^2=0\}$ is a union of two lines through $(0,0)$.

Case 2) of sphere: $\ii=\{ x_1^2+x_2^2+x_3^2=0\}$.

Case 3) of hyperbolic plane: $\ii=\{ x_1^2+x_2^2-x_3^3=0\}$. 

\begin{remark} \label{iicaust} In Cases 2), 3) the  absolute $\ii$ is a complex caustic for every billiard on $\Sigma$. Indeed,  take any point $Q\in\Sigma$ and any germ of $C^1$-smooth curve through $Q$. Let  $L$ denote the geodesic  through $Q$ tangent to $C$. 
 The reflection of geodesics through $Q$ with respect to $L$  is the restriction to $\Sigma$ of a linear involution $J:\rr^3\to\rr^3$ fixing points of $L$ and preserving  the quadratic form defining $\Sigma$. Among the 2-dimensional complex vector subspaces in $\cc^3$ through $Q$ there are exactly two that are tangent to the light cone $\pi^{-1}(\ii)$. They are exactly those subspaces on which 
 the quadratic form defining $\Sigma$ is degenerate. Therefore, their union is $J$-invariant.  Thus, the  absolute  
 is a complex caustic. In fact, the latter subspaces are permuted by reflection. This follows from \cite[corollary 2.3 and proposition 2.5]{gl2}, by duality given by the orthogonal polarity presented at the beginning of Subsection 1.4. 
\end{remark}

Recall that a curve $C\subset\Sigma$ is called a {\it conic,} if it is the intersection of the surface $\Sigma$ with a quadratic cone: zero locus of a homogeneous quadratic polynomial. 
The complexified projectivization of the latter cone is a complex conic in $\cp^2$ containing the projection $\pi(C)$. In what follows we identify $C$ and the latter complex conic. 

Two conics in $\cp^2_{[x_1:x_2:x_3]}$ are called {\it confocal} with respect to the quadratic form $<\mca dx,dx>$, if there exists a symmetric matrix $\mcb$ such that the  conics in question belong to the dual pencil consisting of the conics 
$$\mcc_\la=\{<(\mcb-\la\mca)^{-1}x,x>=0\}, \ \ \la\in\oc.$$
Confocality of conics in $\Sigma$ or of a conic $C\subset\Sigma$ and a complex conic $\alpha\subset\cp^2$ is defined 
as confocality of the corresponding complex conics. See \cite[p. 84]{veselov2} and the above identification.
\begin{theorem} \label{talg2} Let $\Sigma$ be one of the above surfaces. Let $C\subset\Sigma$ be a nonlinear $C^2$-smooth connected embedded curve, not necessarily closed, convex or algebraic.  Let $C$ 
have a  complex caustic $\alpha$, and let in Cases 2), 3) $\alpha$ be distinct from the absolute $\ii$. Then $C$ is a conic, and $\alpha$ is either a confocal conic to $C$, or a finite union of conics confocal to $C$.   
\end{theorem}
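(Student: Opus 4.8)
The plan is to reduce Theorem \ref{talg2} to the already-established planar Theorem \ref{talg1} by exploiting the structure described in the paper: the sketch of proof given after the projective-billiard definitions explains that the projective duality given by the orthogonal polarity sends a billiard on a surface of constant curvature to a dual billiard, and preserves rational $0$-homogeneous integrability via \cite[proposition 1.27]{grat}. So first I would pass to the projective model: using the projection $\pi:\rr^3\setminus\{0\}\to\rp^2$, the curve $C\subset\Sigma$ becomes a curve $\pi(C)\subset\rp^2$ equipped with the complex reflection involution at each point $\pi(Q)$, which is a projective involution of the lines through $\pi(Q)$. The key observation is that this reflection is exactly a projective billiard reflection in the sense of the paper, where the transversal line field $\mcn$ is determined by the quadratic form $<\mca dx,dx>$ defining $\Sigma$. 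Thus the billiard on $\Sigma$ is realized as a projective billiard on $\pi(C)$, and a complex caustic of $C$ is precisely a complex caustic of that projective billiard.

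Next I would dualize. Applying the orthogonal polarity with respect to the Euclidean form $dx_1^2+dx_2^2+dx_3^2$ (or the appropriate form), the curve $\pi(C)$ is carried to its dual curve $\gamma=\pi(C)^*$, and the projective billiard becomes a dual billiard with involutions $\sigma_P$ on each tangent line $L_P$. The complex caustic $\alpha$ dualizes to an invariant curve $\alpha^*$ of the dual billiard, exactly as in the proof of Theorem \ref{talg1}. Mimicking that proof, I would exhibit a rational integral of the dual billiard of the form $R=H/Q^d$, where $Q$ is the homogeneous quadratic polynomial defining the absolute $\ii$ of $\Sigma$ and $H$ is a homogeneous polynomial whose zero locus is $\alpha^*$; here the crucial point is that the absolute $\ii$, which is a complex caustic for every billiard on $\Sigma$ by Remark \ref{iicaust}, plays the role that $\{M_1^2+M_2^2=0\}$ played in the planar case, so the two complex tangent lines to $\ii$ through each point furnish the polar locus for $R$. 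Rational integrability of the dual billiard then yields rational $0$-homogeneous integrability of the projective billiard, and for billiards on surfaces of constant curvature this is equivalent to polynomial integrability. By the solution of Bolotin's conjecture \cite{bm, bm2, gl2} it follows that $C$ is a conic.

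It then remains to identify the caustic. Once $C$ is known to be a conic, I would show that $\alpha$ must be a conic confocal to $C$, or a finite union of such conics. The natural approach is to use the classification of integrals: polynomial integrability of the billiard on a conic forces the invariant curves (caustics) to lie in the dual pencil $\mcc_\la=\{<(\mcb-\la\mca)^{-1}x,x>=0\}$ confocal to $C$ with respect to $<\mca dx,dx>$, since these are precisely the level curves of the classical integral of the confocal family, see \cite[p.\,84]{veselov2}. The homogeneous polynomial $H$ defining $\alpha^*$ factors, after dualizing back, into the product of defining polynomials of members of this confocal pencil; the finiteness of the union and the confocality both follow from the fact that $H$ has fixed even degree $2d$ and that each irreducible invariant conic must be $\sigma_P$-invariant for all $P$, which pins it to the pencil.

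The main obstacle I anticipate is handling the absolute $\ii$ carefully in Cases 2) and 3), which is why the theorem explicitly excludes $\alpha=\ii$ there. Concretely, the complexified quadratic form on $\Sigma$ degenerates precisely along the two subspaces tangent to the light cone, and the reflection permutes these two lines through each point (the last sentence of Remark \ref{iicaust}); this is what makes $\ii$ an automatic complex caustic and simultaneously what allows $Q^d$ to serve as a legitimate polar locus. The delicate step is to verify that the integral $R=H/Q^d$ is genuinely non-constant and independent of the trivial integral even after the absolute is removed from consideration, i.e.\ that a complex caustic distinct from $\ii$ contributes a nontrivial factor to $H$ not already accounted for by powers of $Q$. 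Verifying this non-degeneracy, together with the bookkeeping needed to transport the confocality relation through the two dualities and to confirm that confocality with respect to $<\mca dx,dx>$ is the correct notion in each of the three cases, is where the real work lies; the rest follows the planar template of Theorem \ref{talg1} essentially verbatim.
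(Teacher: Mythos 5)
Your route coincides with the paper's own proof up to and including the conclusion that $C$ is a conic: realize the billiard on $\Sigma$ as a projective billiard via $\pi$, dualize by the orthogonal polarity (under which the absolute $\ii$ is self-dual, so the dual billiard is exactly the $\ii$-angular billiard whose involution $\sigma_P$ fixes $P$ and permutes $L_P\cap\ii$), build a rational $0$-homogeneous function with polar locus $\ii$ and zero locus $\alpha^*$, check $\sigma_P$-invariance of $R|_{L_P}$ by the divisor argument, and then chain Proposition \ref{procriter}, Proposition \ref{ratpol} and the solution of Bolotin's conjecture. Two small points you flag but leave unresolved are handled cheaply in the paper: the worry that $\alpha^*$ might share components with $\ii$ is removed by simply replacing $\alpha^*$ with the union of its irreducible components distinct from $\ii$ (still a caustic, since $\ii$ is one and only finitely many lines are tangent both to $\ii$ and to such a component); and in the spherical case, where $C$ need not project into a single affine chart, the paper notes that the duality between billiards in $\Sigma$ and $\ii$-angular billiards does not use the affine-chart assumption.

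The genuine gap is in your final identification of the caustic. You assert that $H$ factors into defining polynomials of members of the confocal pencil because ``each irreducible invariant conic must be $\sigma_P$-invariant for all $P$, which pins it to the pencil'' --- but this presupposes that the irreducible components of $\alpha^*$ are conics, which is exactly what must be proved; and the existence of one quadratic integral whose level curves are the pencil conics does not by itself force every invariant algebraic curve to be a union of those level curves. The missing ingredient is the paper's appeal to \cite[proposition 2.7]{gl3}: every rational integral of the dual billiard --- in particular $R$ --- is constant on each irreducible component of every level curve of the quadratic integral whose levels form the pencil generated by $\gamma$ and $\ii$. Granting this, any point of $\{R=0\}$ lies on a pencil conic on which $R\equiv 0$, so $\alpha^*=\{R=0\}$ is a union of pencil conics, finite because $\alpha^*$ has finite degree; dualizing back gives that $\alpha$ is a finite union of conics confocal to $C$. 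Without this lemma, or an equivalent functional-dependence argument relating $R$ to the quadratic integral, your factorization claim is unsupported, even though the intended conclusion is the right one.
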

 
\subsection{Projective billiards and rational integrability}
Projective billiards generalize billiards on surfaces of constant curvature, see \cite{tabpr}. Namely, a planar curve 
equipped with the projective billiard structure given by normal line field is a usual billiard.  
Consider now a connected curve $C$ in a surface $\Sigma$ of constant curvature that is disjoint from the plane $\{ x_3=0\}$. 
Then it is bijectively projected onto the curve $\pi(C)$ lying in the affine chart $\rr^2=\{ x_3\neq0\}$. The normal line field 
on $C$ in the metric  of the surface $\Sigma$ is projected by $d\pi$ to a transversal line field on $\pi(C)$:  
a projective billiard structure on $\pi(C)$. Each billiard orbit in $\Sigma$ 
with reflection from the curve $C$ is projected by $\pi$ to a projective billiard orbit.

Recall that for every point $Q$ of a curve $C\subset\rp^2$ by $L_Q$ we denote the projective tangent line to $C$ at $Q$. 

The {\it flow of a projective billiard} is defined in \cite{tabpr}  analogously to the Euclidean billiard, but with reflection of velocities given by the projective  reflection. Namely,  let $\Omega\subset\rr^2_{x_1,x_2}$ be a domain with smooth boundary 
 $C=\partial\Omega$ equipped with a projective billiard structure.  Given a point 
 $(Q,v)\in T\rr^2$, $Q\in\Omega$, $v=(v_1,v_2)\in T_Q\rr^2$, the flow moves the point $Q$ 
with uniform velocity $v$  until it  hits the boundary 
$C$ at some point $H$. 
Take the image of the velocity vector $v$ under its translation to $H$. 
Take its reflection image $v^*=\mathbf I_H(v)\in T_H\rr^2$, see Remark \ref{reminv}. Afterwards 
the flow moves the point $H$ with the new uniform velocity $v^*$ until its orbit 
hits $C$  etc. See Fig. 2. 

Birkhoff integrability of projective billiard is equivalent to existence of a non-trivial  first integral of the billiard flow that is 
$0$-homogeneous in velocity and whose restriction to the intersection of 
$T\rr^2|_{\overline\Omega}$ with a neighborhood of the unit tangent bundle to $\partial\Omega$ does not have critical points 
\cite{grat}. This is analogous to the above similar folklore fact on the usual billiards. 
 \begin{figure}[ht]
  \begin{center}
  \vspace{-0.7cm}
   \epsfig{file=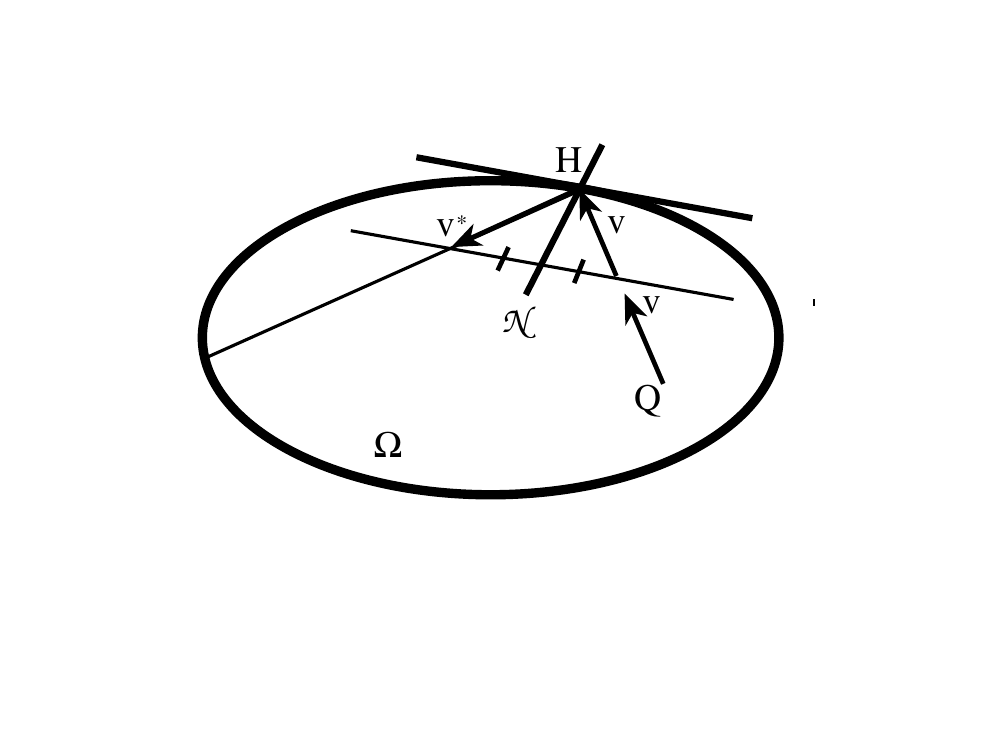, width=20em}
   \vspace{-2cm}
   \caption{Projective billiard flow}
   \vspace{-0.7cm}
        \label{fig:0}
  \end{center}
\end{figure}
 \begin{definition} \cite{grat}. A planar projective billiard is {\it rationally 0-homogeneously 
 integrable,} if its flow admits a non-constant 
 first integral $I$ of the type 
 $$I(Q,v)=\frac{I_{1,Q}(v)}{I_{2,Q}(v)}; \ \ \ I_{1,Q}(v), \ I_{2,Q}(v) 
 \text{ are homogeneous polynomials,}$$
$$\deg I_{1,Q}=\deg I_{2,Q}, \ \ \ \ \text{ the degrees } \deg I_{j,Q} \text{ are uniformly bounded in } Q.$$
It is called a {\it rational 0-homogeneous integral:}  $I(Q,\la v)\equiv I(Q,v)$ for $\la\in\rr$.
 \end{definition}
 The notion of a caustic or a complex caustic of a projective billiard is the same, as for the usual billiard, but now with 
reflection being the projective billiard reflection. 
\begin{remark} \label{rlev} Each rational 0-homogeneous integral of a projective billiard, if any, is always a rational 0-homogeneous function of the moment vector 
$$M=[r,v]=(-v_2,v_1,\Delta), \ \ r=(x_1,x_2,1), \ \ v=(v_1,v_2,0), \ \ \Delta=x_1v_2-x_2v_1,$$
see \cite{grat}. This is a generalization of S.Bolotin's observation \cite{bolotin2}. 
In the above integrable case  each  level set $\{ I=c\}$ is the union of tangent bundles of a one-parameter family of lines. This follows by 0-homogeneity  of the integral $I$ and its invariance under the billiard flow.  The enveloping curve of the latter line family is a caustic of the projective billiard. 
\end{remark}
\begin{example} \label{defdp} \cite{grat} Let $C\subset\rr^2$ be a non-linear and non-single-point real conic. 
Let $S\subset\cp^2$ be a complex conic defined by a real quadratic equation. Let 
$\mathcal B\subset C$ denote the subset of points  $X\in C$ for which the complex tangent line to $C$ at $X$ 
is also tangent to $S$. It consists of at most four points and is called the {\it base set} of the dual pencil generated by $C$ and $S$.  
For every point $Q\in C^o:=C\setminus\mathcal B$ consider the linear involution $\mathbf I_Q: T_Q\rr^2\to  T_Q\rr^2$ having eigenline $T_QC$ with eigenvalue 1 and whose complexification permutes the complex  lines through $Q$  tangent to  $S$.  It has a real eigenline $\mcn(Q)$  with eigenvalie -1. The projective billiard on $C^o$ defined by the eigenline field $\mcn$ is called a {\it dual pencil type  billiard.} It has a family of conical caustics whose  projective dual 
 conics lie in a pencil  containing the conics dual to $C$ and $\alpha$.  
The above line $\mcn(Q)$ is exactly the line that passes through $Q$ and through the pole of the tangent line to $C$ at $Q$ with respect to the conic $S$. See  \cite[section III]{ccs} and 
 \cite[section 2.3]{fierobe-th} for the two latter statements.
 
\end{example}
 It was shown in  \cite{grat} that rationally 0-homogeneously integrable projective billiard exist only on conics. They were 
 classified there up to projective equivalence, i.e., up to projective transformation, see the next theorem.
 
\begin{theorem} \label{tgermproj} \cite{grat} {\bf A.} Let $C\subset\rr^2_{x_1,x_2}$ be a non-linear $C^4$-smooth germ of 
 curve at a point $O$, equipped with a transversal line field $\mcn$. 
  Let the corresponding germ of projective billiard be $0$-homogeneously rationally integrable. 
 Then $C$ is a conic. The line field $\mcn|_{C\setminus\{ O\}}$ extends to a global analytic line field   on the  conic $C$. It is    transversal to $C$ except for at most four tangency points:  projective billiard singularities. The extended field $\mcn$  has one of the following types up to real projective equivalence.
 
 1) The field $\mcn$ defines a dual pencil type  billiard, see Example \ref{defdp}.

2)  An {\bf exotic billiard:} $C=\{x_2=x_1^2\}\subset\rr^2_{x_1,x_2}\subset\rp^2$, and 
the  line field $\mcn$  is directed by one of the following vector fields at points of  $C$: 
 \smallskip
 
2a) \ \ \ \ \ $(\dot x_1,\dot x_2)=(\rho,2(\rho-2)x_1)$, 
$$\rho=2-\frac2{2N+1} \text{ (case 2a1)), \ or }  \ \ \rho=2-\frac1{N+1} \text{ (case 2a2)), }  \ \ N\in\mathbb N,$$
 the  vector field 2a) has quadratic first integral $\mcq_{\rho}(x_1,x_2):=\rho x_2-(\rho-2)x_1^2.$

2b1) \ $(\dot x_1, \dot x_2)=(5x_1+3,
2(x_2-x_1))$, \ \ \ 2b2)  \ $(\dot x_1,\dot x_2)=(3x_1, 2x_2-4)$, 
\smallskip

2c1) \ $(\dot x_1, \dot x_2)=(x_2, 
x_1x_2-1)$, \ \ \ \ \ \ 2c2) \ $(\dot x_1, \dot x_2)=(2x_1+1, 
x_2-x_1)$.

2d) \ $(\dot x_1, \dot x_2)=(7x_1+4, 2x_2-4x_1)$.

{\bf B.} The tangency points of  the field $\mcn$  with the complex conic $C$ are: 

Case 1):  the base points of the dual pencil.

Case 2a): $(0,0)$ and the infinite point $E$ of the parabola $C$. 

Cases 2b1), 2c2), 2d): $(0,0)$, $(-1,1)$, $E$.

Case 2b2): $(\pm i,-1)$, $E$. 

Case 2c1): $(-e^{\frac{2\pi i j}3}, e^{\frac{4\pi i j}3})$,  $j=0,1,2$. 
\end{theorem}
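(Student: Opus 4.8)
The plan is to reduce the classification to a single ordinary differential equation for the line field $\mcn$ and then to solve that equation, working throughout in the projective-dual picture. By Remark \ref{rlev} every rational $0$-homogeneous integral is a rational function of the moment vector $M=[r,v]$, hence a rational function on the $\rp^2$ of lines. Applying the projective duality of Subsection 1.4, I would pass from the projective billiard on $C$ to the dual billiard on $\gamma=C^*$; by the cited \cite[proposition 1.27]{grat}, rational $0$-homogeneous integrability of the former is equivalent to rational integrability of the latter. This produces a nonconstant rational function $R$ whose restriction to each projective tangent line $L_P$ of $\gamma$ is invariant under the reflection involution $\sigma_P$.

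First I would prove that $C$ is a conic. Each $\sigma_P$ is a projective involution of $L_P$ fixing $P$, and is determined by its second fixed point, which encodes the line field $\mcn$. Differentiating the identity $(R\circ\sigma_P)|_{L_P}=R|_{L_P}$ as $P$ moves along $\gamma$ yields an overdetermined relation tying together $\gamma$, the field $\mcn$, and the zero/pole divisor of $R$. Tracking the leading behaviour of $R$ along $L_P$ and using that $R$ is nonconstant forces the degree of $\gamma$, hence of $C$, to be two; equivalently, the envelope of the level curves of $R$ must be a conic, which pins down $C$. I would then normalise $C$ by a projective transformation, to the parabola $\{x_2=x_1^2\}$ in the exotic cases and to a member of the relevant pencil in case 1). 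The involution $\mathbf I_Q$ is determined by the pair $(T_QC,\mcn(Q))$, so the invariance of $R$ becomes an algebraic-differential relation, that is, a first-order ODE whose solutions are exactly the admissible line fields, presented as the vector fields $(\dot x_1,\dot x_2)$ of the statement. Since these solutions turn out to be algebraic, $\mcn$ automatically extends from the germ to a global analytic line field on the conic, tangent to $C$ only at the finitely many points (at most four, by a degree count) solving the equation $\mcn(Q)=T_QC$.

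The remaining and hardest step is to solve this ODE and to decide for which solutions $R$ is genuinely rational. The dual pencil type 1) corresponds to $R$ being a ratio of two quadratic forms, which is automatically rational and gives the generic integrable family, with caustics forming the pencil of Example \ref{defdp}. The exotic solutions carry a quadratic first integral, such as $\mcq_\rho=\rho x_2-(\rho-2)x_1^2$ for case 2a); here the primitive defining $R$ is a priori multivalued, and single-valuedness of $R$ forces a resonance between the exponents appearing in the integrating factor. These resonances select exactly the discrete values $\rho=2-\frac2{2N+1}$ and $\rho=2-\frac1{N+1}$, $N\in\nn$, in case 2a), together with the finitely many sporadic fields 2b1)--2d). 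For Part B one then locates the projective billiard singularities, where $\mcn(Q)=T_QC$: for case 1) these are the base points of the dual pencil, and for each exotic field direct substitution into $\{x_2=x_1^2\}$ and its points at infinity produces the tabulated points.

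The main obstacle I anticipate is the exhaustiveness of the exotic list: proving that the resonance conditions account for all rational integrals and that no further sporadic vector fields occur. This amounts to a complete determination of when the multivalued primitive of the line-field ODE closes up to a single-valued rational function on $\cp^2$, together with a careful finite analysis of the low-degree exceptional solutions. The simultaneous appearance of two infinite integer families and four isolated cases is precisely what makes this step delicate.
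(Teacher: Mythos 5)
Theorem \ref{tgermproj} is not proved anywhere in this paper: it is imported verbatim from \cite{grat}, so the only meaningful comparison is with the strategy of that cited work. Your opening reduction --- dualize via the orthogonal polarity and use the equivalence between rational $0$-homogeneous integrability of the projective billiard and rational integrability of the dual billiard (Proposition \ref{procriter}) --- does match the setup of \cite{grat}. Beyond that, however, your text is a programme rather than a proof, and the two steps carrying all the content are left unexecuted. The first is conic-ness: you assert that differentiating the invariance identity $(R\circ\sigma_P)|_{L_P}=R|_{L_P}$ along $\gamma$ and ``tracking the leading behaviour'' forces $\deg\gamma=2$, but no mechanism is given for why a higher-degree dual curve cannot carry such an $R$. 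In \cite{grat}, following the prototype \cite{gs} for outer billiards, this step is done by a local singularity analysis of the complexified dual curve --- Puiseux expansions of local branches, comparison of zero/pole divisors of $R|_{L_P}$ at singular, inflection and isotropic points, and multiplicity bounds --- and it is the hardest part of the whole theorem. Differentiating the invariance relation produces no degree bound by itself, so as written this step would fail.

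The second gap is exhaustiveness of the list, which you correctly identify as delicate but then only conjecture a mechanism for: single-valuedness of a multivalued primitive forcing ``resonances'' selecting $\rho=2-\frac{2}{2N+1}$ and $\rho=2-\frac{1}{N+1}$. That is a guess at the shape of the answer, not an argument: nothing in your sketch derives the ODE you appeal to, produces the sporadic fields 2b1)--2d), or excludes further solutions. Likewise, your ``degree count'' for the at-most-four tangency points presupposes that $\mcn$ extends to a global algebraic line field on $C$, which is itself part of what must be proved (you infer it from algebraicity of the ODE solutions, but that algebraicity is exactly what is missing). Part B is indeed routine once Part A is granted, and your substitution plan for it is fine. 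In sum: you have reconstructed the known reduction and the shape of the known answer, but neither of the two theorems-within-the-theorem --- that $C$ is a conic, and that the admissible fields are exactly 1), 2a)--2d) --- is actually proved or even reduced to a checkable computation.
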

{\bf Addendum to Theorem \ref{tgermproj} \cite{grat}.} {\it The projective billiards from Theorem \ref{tgermproj} have the 
following $0$-homogeneous rational integrals, called {\bf canonical integrals:}

Case 1): A ratio of two homogeneous quadratic polynomials in the moment vector $M=(-v_2, v_1, \Delta)$, $\Delta:=x_1v_2-x_2v_1$. 
The above polynomials vanish on two conics dual to some two conics of the dual pencil defining $\mcn$. Here the projective duality is given by the orthogonal polarity, see the beginning of Subsection 1.4.

Case 2a1), $\rho=2-\frac2{2N+1}$:
\begin{equation}\Psi_{2a1}(M):=\frac{(4v_1\Delta-v_2^2)^{2N+1}}{v_1^2\prod_{j=1}^N(4v_1\Delta-c_jv_2^2)^2}, \ \ \ 
c_j=-\frac{4j(2N+1-j)}{(2N+1-2j)^2}. \label{r2a1v}\end{equation} 

Case 2a2), $\rho=2-\frac1{N+1}$:
\begin{equation}\Psi_{2a2}(M)=\frac{(4v_1\Delta-v_2^2)^{N+1}}{v_1v_2\prod_{j=1}^N(4v_1\Delta-c_jv_2^2)}, \ \ \ 
c_j=-\frac{j(2N+2-j)}{(N+1-j)^2}. \label{r2a2v}\end{equation}

Case 2b1): 
 \begin{equation}\Psi_{2b1}(M)=\frac{(4v_1\Delta-v_2^2)^2}{(4v_1\Delta+3v_2^2)(2v_1+v_2)(2\Delta+v_2)}.\label{r2b1v}\end{equation}
 
 Case 2b2): 
 \begin{equation}\Psi_{2b2}(M)=\frac{(4v_1\Delta-v_2^2)^2}{(v_2^2+4\Delta^2+
4v_1\Delta+4v_1^2)(v_2^2+4v_1^2)}.
 \label{r3b2v}\end{equation}
 
 \begin{equation}\text{Case 2c1): } \ \ \ \ \ \ \  \ \ \ \ \Psi_{2c1}(M)=\frac{(4v_1\Delta-v_2^2)^3}{(v_1^3+\Delta^3+
 v_1v_2\Delta)^2}.\label{r2c1v}\end{equation}
 
 Case 2c2): 
  \begin{equation}\Psi_{2c2}(M)=\frac{(4v_1\Delta-v_2^2)^3}{(v_2^3+2v_2^2v_1+(v_1^2+2v_2^2+5v_1v_2)\Delta+v_1\Delta^2)^2}.
 \label{r2c2v}\end{equation}
 
 Case 2d):}  $\Psi_{2d}(M)$
 \begin{equation}=\frac{(4v_1\Delta-v_2^2)^3}{(v_1\Delta+2v_2^2)(2v_1+v_2)(8v_1v_2^2+2v_2^3+(4v_1^2+5v_2^2+28v_1v_2)\Delta+16v_1\Delta^2)}.
 \label{r2dv}\end{equation}

\subsection{Projective and dual billiards. Duality and integrability}
The {\it orthogonal polarity} in the Euclidean space $\rr^3_{x_1,x_2,x_3}$ sends a two-dimensional vector subspace $W$ 
to its orthogonal complement $W^\perp$: a one-dimensional vector subspace. Its projectivization, which sends 
the projective line $\pi(W\setminus\{0\})$ to the point $\pi(W^\perp\setminus\{0\})$, is a projective duality 
$\rp^{2*}\to\rp^2$ sending lines to points, which is also called orthogonal polarity. Every curve $C\subset\rp^2$ is transformed 
to its dual curve $C^*$, which consists of the points dual to the tangent lines to $C$. The above duality transforms 
the space of lines through a given point $Q\in C$ to the line tangent to the dual curve $\gamma:=C^*$ at the point $P=P(Q)=L_Q^*\in \gamma$. 

For every line $L\subset\{ x_3=1\}=\rr^2_{x_1,x_2}$ denote by $W_L\subset\rr^3$ the two-dimensional 
vector subspace containing $L$.  Take an arbitrary vector $v=(v_1,v_2)$ directing $L$. Then $W^{\perp}_L$ is generated by the 
moment vector $M=[r,v]$, see Remark \ref{rlev}. Thus, the point in $\rp^2$ dual to $L$ has homogeneous coordinates 
$[M_1:M_2:M_3]$. Therefore, the above projective duality sends lines in $\rp^2_{[x_1:x_2:x_3]}$ to points in 
$\rp^2_{[M_1:M_2:M_3]}$.

Let now a curve $C$ lie in the affine chart $\rr^2=\{ x_3\neq0\}$ identified with the plane $\{ x_3=1\}$. Let $C$  be equipped with a projective billiard structure. Let 
$Q\in C$. It is well-known that then the above duality conjugates the projective billiard reflection involution at $Q$ with a projective involution of the line $Q^*=L_P$ tangent to $\gamma=C^*$ at $P$, see \cite{tabpr, grat}.  
This yields a dual billiard structure on $\gamma$. Conversely, each dual billiard structure on a planar curve that does not pass through the origin  is dual to a projective billiard. 

A curve $S$ is a caustic for a projective billiard on a curve $C$, if and only if 
its dual $S^*$ is an {\it invariant curve} for the corresponding dual billiard: by definition, this means that for every $P\in \gamma$ the involution $\sigma_P$ permutes the intersection points of the line $L_P$ with $S^*$. See Fig. 3. 
  \begin{figure}[ht]
  \begin{center}
  \vspace{-0.8cm}
   \epsfig{file=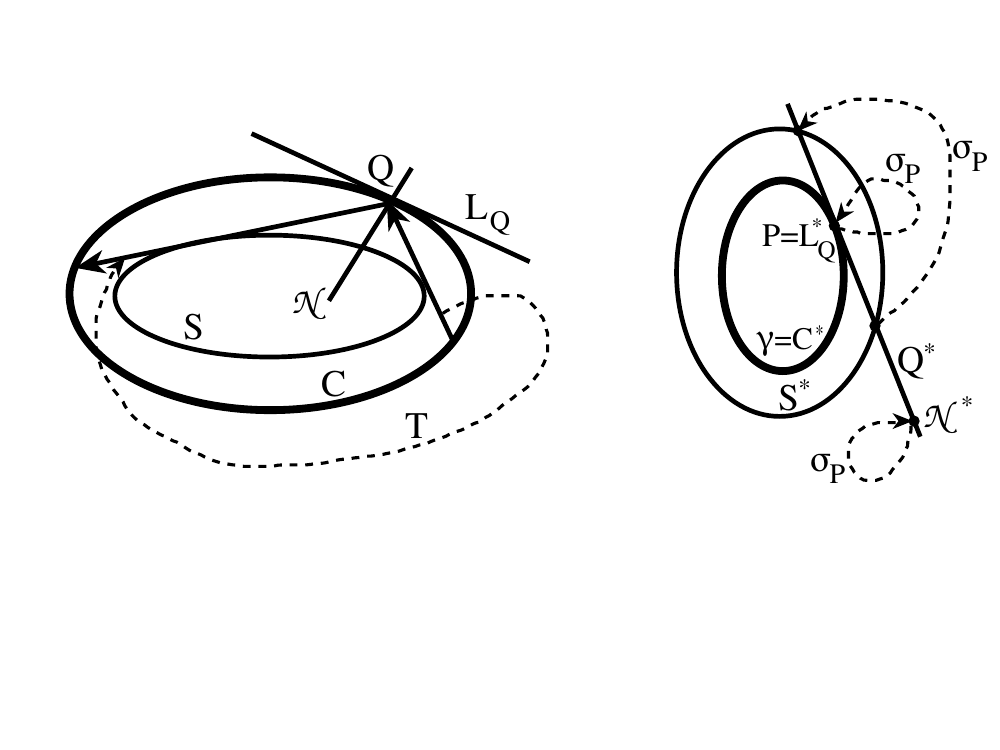, width=23em}
   \vspace{-2.5cm}
    \caption{The projective billiard reflection involution $T$ acting on lines through a point $Q\in C$ and the dual involution $\sigma=\sigma_{P}$ acting on  the dual line $Q^*$ tangent to the dual curve $\gamma=C^*$ at the point $P=L_Q^*$.}
    \label{fig:0}
    \vspace{-0.5cm}
  \end{center}
\end{figure}
 \begin{example} Consider a {\it central projective billiard} on a planar curve $C$, where all the lines of the transversal field pass through the origin. Its dual is the {\it outer  billiard} on the dual curve $\gamma$, see 
  \cite{tabpr}. Namely, $\gamma$ lies in the standard affine chart $\rr^2=\{ M_3\neq0\}\subset\rp^2_{[M_1:M_2:M_3]}$ and the dual billiard involutions $\sigma_P:L_P\to L_P$ are affine central symmetries with respect to $P$.  
  \end{example}
  \begin{example} \label{exbm} The dual to the usual Euclidean billiard on a curve $C\subset\rr^2$ is the {\it Bialy--Mironov angular billiard} on  $\gamma=C^*\subset\rp^2$. The corresponding involutions $\sigma_P:L_P\to L_P$, $P\in\gamma$, called {\it Bialy--Mironov angular symmetries,} are defined as follows. Let $O=(0,0)\in\rr^2$. Note that for every $P\in\gamma$ one has $OP\neq L_P$:  in the contrary case 
 the point $Q=L_P^*\in C$ would be infinite, which is impossible. By definition, 
  $\sigma_P$ fixes $P$, and the involution $\sigma_P$ permutes  points 
 $a^*, b^*\in L_P$ if and only if  the lines $Oa^*$, $Ob^*$  are symmetric with respect to the line $OP$.  
 See \cite{bm} and Fig. 4. 
 \end{example}

 \begin{figure}[ht]
  \begin{center}
  \vspace{-1.2cm}
   \epsfig{file=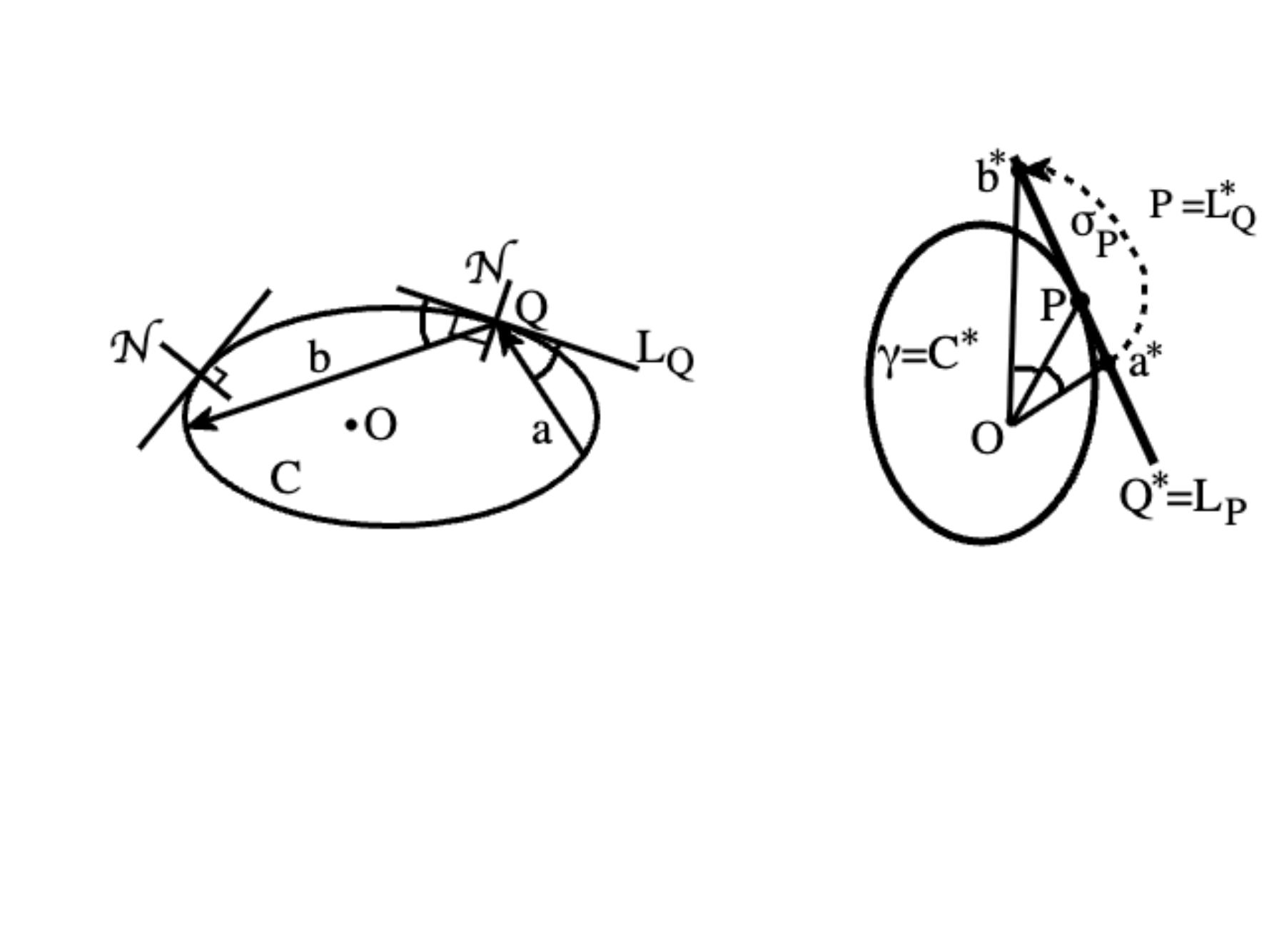, width=25em}
   \vspace{-3.1cm}
   \caption{Euclidean  billiard and its dual: Bialy -- Mironov angular billiard. Here it is obtained by the polar duality with respect to the unit circle, which is the  orthogonal polarity post-composed with the central symmetry.}
        \label{fig:4}
        \vspace{-0.3cm}
  \end{center}
\end{figure}
\begin{example} \label{exdp} Let conics $C$, $S$ and punctured conic $C^o$ be the same, as in Example \ref{defdp}. 
Consider the dual pencil type projective billiard structure on $C^o$ defined there to have $S$ as a caustic. Let $\gamma=C^*, 
S^*\subset\cp^2_{[M_1:M_2:M_3]}$ be their dual conics with respect to the orthogonal polarity. Let 
$$\gamma=\{<UM,M>=0\}, \ \ S^*=\{<AM,M>=0\}.$$
Then the corresponding dual billiard is defined on the punctured conic $\gamma^o$: the conic $\gamma$ punctured at the points dual to the common tangent lines to $C$ and $S$. For every $P\in\gamma^o$ the  involution $\sigma_P:L_P\to L_P$ permutes the points of intersection of the line $L_P$ with the conic $S^*$. Moreover, the latter holds with 
$S^*$ replaced by every conic from the pencil of conics 
$$\mcc_\la:=\{<(U-\la A)M,M>=0\}, \ \ \la\in\oc; \ \ \mcc_0=\gamma,$$
by Desargues' Theorem,  see  \cite{berger87}. The duality given by the orthonogal polarity transforms the pencil $\mcc_\la$ 
to the dual pencil of conics containing $C$ and $S$:
\begin{equation}\mcc_\la^*:=\{\mca_\la x,x>=0\}, \ \ \mca_\la:=(U-\la A)^{-1}.\label{fordp}\end{equation}
Its complex conics are caustics for the projective billiard on $C$, by duality. 
\end{example}

\begin{proposition} \label{procriter}\footnote{A generalization of Proposition \ref{procriter} for piecewise smooth projective billiards is given in \cite[proposition 3.3]{gl3}.} \cite{grat}
 1)  A planar projective billiard on a  $C^2$-smooth curve is rationally 0-homogeneously 
 integrable with integral of degree $n$, if and only if its dual billiard  is rationally integrable with integral of degree $n$.
 
 2)  If $R(M)$ is a rational $0$-homogeneous function 
 such that $R[r,v]$ is a degree $n$ first integral of the projective billiard flow, then 
 $R(M)$ is a degree $n$ rational integral of the dual billiard. 
 
 3)  Conversely, if $R(M)$ is a rational integral of 
 the dual billiard,  then $R([r,v])$ is a $0$-homogeneous rational integral 
 of the projective billiard. 
 \end{proposition}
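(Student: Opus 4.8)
The plan is to exploit the explicit duality dictionary already recorded in Remark \ref{rlev} and in the discussion of the orthogonal polarity at the beginning of Subsection 1.4: an oriented line $L\subset\{x_3=1\}$ through a point with radius vector $r=(x_1,x_2,1)$ and direction $v=(v_1,v_2,0)$ is sent by the polarity to the point $[M_1:M_2:M_3]\in\rp^2_{[M_1:M_2:M_3]}$ with $M=[r,v]=(-v_2,v_1,\Delta)$, $\Delta=x_1v_2-x_2v_1$. The first thing I would record is that $M$ is linear, hence $1$-homogeneous, in $v$. Consequently, for any $0$-homogeneous rational function $R(M)=P(M)/Q(M)$ with $P,Q$ homogeneous of degree $n$, the composition $(Q,v)\mapsto R([r,v])$ is automatically $0$-homogeneous in $v$, and its numerator $P([r,v])$ and denominator $Q([r,v])$ are homogeneous of degree $n$ in $v$ with coefficients depending on the position. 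This settles the $0$-homogeneity claim and the bookkeeping of degrees appearing in all three parts at once.

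The core of the argument is to show that $R([r,v])$ is a first integral of the projective billiard flow \emph{if and only if} $R(M)$ is a rational integral of the dual billiard. I would split invariance under the flow into its two constituents. Along the free (geodesic) part, the point moves as $r(t)=r_0+tv$ along $L$ while $v$ stays fixed, so $[r(t),v]=[r_0,v]+t[v,v]=[r_0,v]$; thus the moment $M$, and therefore $R([r,v])$, is automatically constant along each straight segment. Hence invariance of $R([r,v])$ under the whole flow is equivalent to its invariance under the reflection step alone.

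Now fix a reflection point $H\in C$ and set $P=L_H^*\in\gamma$. The orthogonal polarity carries the pencil of lines through $H$ bijectively onto the tangent line $L_P=H^*$ of $\gamma$ at $P$, sending a line through $H$ to its dual point of $L_P$, and it conjugates the projective billiard reflection involution at $H$ to the dual involution $\sigma_P$ on $L_P$ (the duality between the two reflection laws recalled in Subsection 1.4, \cite{tabpr,grat}). Therefore, writing $M$ for the moment of the incoming line and $M^*$ for that of the reflected line, we have $M^*=\sigma_P(M)$, and invariance of $R([r,v])$ under reflection at $H$ reads $R(M)=R(\sigma_P(M))$ for every $M\in L_P$, i.e.\ exactly $\sigma_P$-invariance of $R|_{L_P}$. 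Letting $H$ range over $C$, so that $P$ ranges over $\gamma$, and noting that non-constancy is preserved by the bijection $(Q,v)\leftrightarrow M$, I obtain the equivalence with matching degree $n$. Part 2) and part 3) are then the two directions of this equivalence read off directly, and part 1) is their combination.

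The step I expect to require the most care is the identification in the third paragraph: that the polarity maps the pencil of lines through $H$ onto \emph{all} of $L_P$, so that checking $\sigma_P$-invariance on the entire tangent line is genuinely equivalent to invariance of $R([r,v])$ over all incoming directions, and that the reflection at $H$ is conjugated to $\sigma_P$ precisely rather than to some other involution of $L_P$. Once this dictionary is in place, together with the constancy of the moment along free flight, the remaining verifications — $0$-homogeneity, the degree count, and preservation of non-constancy — are immediate.
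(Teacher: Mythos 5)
Your dictionary is the right one, and parts 2) and 3) are handled essentially correctly, by the same mechanism as in the source the paper quotes (the paper gives no proof of Proposition \ref{procriter}; it cites \cite[proposition 1.27]{grat}): linearity of $M=[r,v]$ in $v$ gives the $0$-homogeneity and the degree bookkeeping; $[r_0+tv,v]=[r_0,v]$ disposes of the free-flight part, so only the reflection step matters; and the orthogonal polarity carries the pencil of lines through a reflection point $H$ bijectively onto the tangent line $L_P$, $P=L_H^*$, conjugating the projective reflection at $H$ to $\sigma_P$ (this conjugation is the standard fact recalled in Subsection 1.4). One small point to patch: the flow only exhibits invariance of $R|_{L_P}$ on a real open subset of $L_P$ (moments of actual incoming lines), but since $R|_{L_P}$ and $R\circ\sigma_P|_{L_P}$ are rational on the complexified line, agreement on a real open arc gives $\sigma_P$-invariance on all of $L_P$; you should say this explicitly.

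The genuine gap is your final claim that part 1) ``is the combination'' of parts 2) and 3). The forward implication of part 1) starts from an integral $I(Q,v)=I_{1,Q}(v)/I_{2,Q}(v)$, a ratio of homogeneous polynomials in $v$ of uniformly bounded degree whose coefficients are \emph{a priori arbitrary} functions of $Q$; nothing in the hypothesis says $I$ has the form $R([r,v])$ for a rational function $R$ on $\rp^2_{[M_1:M_2:M_3]}$ --- but that is exactly the hypothesis of your part 2). Your observation that $M$ is constant along free flight shows that functions of $M$ are flight-invariant; it does not show the converse, that every flight-invariant $I$ factors through $M$, let alone through a \emph{globally rational} function of degree $n$. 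What one gets cheaply is only this: constancy along lines and $0$-homogeneity let $I$ descend to a function on an open set of the dual plane whose restriction to each dual line $L_{P(Q)}$ (the dual of the pencil of lines through a point $Q$ of the billiard domain) is rational of degree at most $n$. Upgrading ``rational of uniformly bounded degree on every line of a two-parameter family'' to ``rational on $\rp^2$'' is a real step --- it is precisely the generalization of Bolotin's observation \cite{bolotin2} recorded as Remark \ref{rlev} and proved in \cite{grat} (compare the polynomial-case analogue in \cite[theorem 2.8]{gl2}). You cite Remark \ref{rlev} only for the formula $M=[r,v]$; either invoke its full statement, after which part 1) does reduce to part 2), or supply an interpolation argument establishing rationality of the descended function. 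As written, your proof establishes parts 2) and 3) and only the ``if'' direction of part 1).
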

 \begin{remark} Versions of Proposition \ref{procriter}  for polynomially integrable billiards on surfaces of constant curvature 
 were earlier proved respectively in the paper \cite{bolotin2} by S.V.Bolotin   and in two joint papers by  M.Bialy and A.E.Mironov 
 \cite{bm, bm2}; see also \cite[theorem 2.8]{gl2}. 
 \end{remark}
\subsection{Generalization of Theorem \ref{talg1} to projective billiards}

If a projective billiard has a rational 0-homogeneous integral $I=I(M)$, $M=[r,v]$, then each its level hypersurface 
 is a union  of complex tangent bundles of complex one-dimensional family of complex lines. 
The enveloping curve of the latter family of lines is a  complex  caustic,  as in Remark \ref{rlev}. It is the orthogonal-polar-dual curve to a level curve of the integral $I(M)$ considered as a function of $[M_1:M_2:M_3]\in\cp^2$, which is a rational integral of the corresponding dual billiard. See Proposition \ref{procriter}.

\begin{theorem} \label{talg3} Let $C$ be a nonlinear $C^4$-smooth connected  embedded planar curve. Let  
$C$ be equipped with a projective billiard structure having at least two different complex  caustics. 
Then $C$ is a conic, and the projective billiard is rationally 0-homogeneously integrable. Thus, 
up to projective transformation, the projective billiard on $C$ belongs to the list from Theorem \ref{tgermproj} and has the corresponding canonical integral $\Psi$ from its addendum. Each complex caustic  is  a finite union of enveloping curves corresponding to level hypersurfaces of the integral $\Psi$, see the above paragraph. 
 In other words, it is a finite union of curves that are dual to some level curves 
of the integral $\Psi(M)$ as a function on $\cp^2_{[M_1:M_2:M_3]}$ under the orthogonal polarity. 
\end{theorem}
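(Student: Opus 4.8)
The plan is to follow the dualization strategy of Theorem \ref{talg1}, with the second given caustic playing the role that the absolute plays there. First I would apply the orthogonal polarity of Subsection 1.4 to pass from the projective billiard on $C$ to the dual billiard on $\gamma=C^*$, with reflection involutions $\sigma_P\colon L_P\to L_P$. Under this duality every complex caustic $\alpha$ of the projective billiard becomes an invariant curve $\alpha^*\subset\cp^2_{[M]}$ of the dual billiard, i.e.\ $\sigma_P$ permutes $\alpha^*\cap L_P$ for every $P\in\gamma$ (this is the caustic/invariant-curve correspondence recalled in Subsection 1.4). Thus the hypothesis supplies two distinct invariant curves $\alpha_1^*,\alpha_2^*$, and the whole point is to manufacture from them a non-constant rational integral of the dual billiard.

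The key step is the following construction. I choose homogeneous polynomials $H_1,H_2$ on $\cc^3_{[M]}$ with zero loci $\alpha_1^*,\alpha_2^*$ and degrees $m_1,m_2$, and set $R=H_1^{m_2}/H_2^{m_1}$, a $0$-homogeneous rational function. I claim $R$ is an integral of the dual billiard, i.e.\ $R\circ\sigma_P=R$ on each $L_P$. Restricting to $L_P\cong\cp^1$, the divisor of $H_i|_{L_P}$ is cut out by $\alpha_i^*\cap L_P$ and is $\sigma_P$-invariant, since $\sigma_P$ permutes these intersection points preserving multiplicities; hence $\mathrm{div}(R|_{L_P})$ is a $\sigma_P$-invariant divisor of degree $0$. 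Consequently $\sigma_P^*(R|_{L_P})$ and $R|_{L_P}$ have equal divisors on $\cp^1$, so they differ by a constant $c_P$, and $c_P=\pm1$ because $\sigma_P$ is an involution. Evaluating at a fixed point of $\sigma_P$ lying off $\alpha_1^*\cup\alpha_2^*$ — such a fixed point exists for generic $P$ — forces $c_P=1$, and by analyticity of $c_P$ in $P$ with connectedness of $\gamma$ one gets $c_P\equiv1$. Since $\alpha_1^*\neq\alpha_2^*$, the function $R$ is non-constant, hence a non-trivial rational integral of the dual billiard. By Proposition \ref{procriter} its pullback $R([r,v])$ is a non-trivial rational $0$-homogeneous integral of the projective billiard, of uniformly bounded degree. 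Theorem \ref{tgermproj} then shows $C$ is a conic, places the billiard (up to projective transformation) in the stated list, and furnishes its canonical integral $\Psi$.

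It remains to identify the caustics. Given any complex caustic $\alpha$, its dual $\alpha^*$ is an invariant curve, and I would show that every irreducible component $Z$ of $\alpha^*$ is contained in a single level curve $\{\Psi=c\}$. On a generic line $L_P$ meeting $Z$ the function $\Psi|_{L_P}$ is $\sigma_P$-invariant, so $\Psi$ takes equal values on each $\sigma_P$-pair of points of $Z\cap L_P$; letting $P$ range over the finitely many tangent lines of $\gamma$ through a moving point of $Z$ defines an algebraic self-correspondence of $Z$ along which $\Psi$ is constant. Irreducibility of $Z$ makes the orbit of this correspondence Zariski-dense, forcing $\Psi|_Z$ to be constant, so $Z\subset\{\Psi=c\}$ and $\alpha^*$ is a finite union of components of level curves of $\Psi$. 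Dualizing back under the orthogonal polarity expresses $\alpha$ as a finite union of duals of level curves of $\Psi(M)$, i.e.\ of the enveloping curves of Remark \ref{rlev}, which is the asserted description.

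The main obstacle is this last step, the passage from a single invariant curve to a union of level curves of $\Psi$: one must control the connectivity (monodromy) of the tangential self-correspondence on each component $Z$ to conclude that $\Psi|_Z$ is constant, and deal with degenerations at the finitely many billiard singularities (the base and tangency points listed in part~B of Theorem \ref{tgermproj}) and at components of $Z$ that happen to lie in the polar locus of $\Psi$. A secondary, more routine difficulty, already present in the proof of Theorem \ref{talg1}, is ruling out the sign $c_P=-1$ uniformly in $P$ and handling tangency points where $L_P$ meets $\alpha_i^*$ non-transversally, where the invariance of the divisor $\mathrm{div}(H_i|_{L_P})$ must be checked with multiplicities.
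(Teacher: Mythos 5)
Your first half is essentially the paper's own argument. The paper dualizes exactly as you do, obtains two invariant curves $\alpha_1^*,\alpha_2^*$ of the dual billiard on $\gamma=C^*$, and fixes a $0$-homogeneous rational function whose zero and polar divisors are supported on them (your normalization $R=H_1^{m_2}/H_2^{m_1}$ is a harmless variant of the paper's choice of coefficients $c_1\in-\nn$, $c_2\in\nn$; the paper also first discards the common irreducible components of $\alpha_1^*,\alpha_2^*$, noting their union is itself a caustic, whereas your construction stays non-constant as long as the loci differ as sets). The invariance argument -- $\sigma_P$-invariance of $\mathrm{div}(R|_{L_P})$, hence $R\circ\sigma_P=c_P\,R$ on $L_P$ with $c_P=\pm1$, killed by evaluating at a fixed point off the zero and polar loci -- is precisely the argument of Proposition \ref{pinvlp}, and the passage through Proposition \ref{procriter} to Theorem \ref{tgermproj} and its canonical integral $\Psi$ matches the paper.

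The genuine gap is in your last step, and you correctly diagnose it yourself. First, your ``algebraic self-correspondence of $Z$'' is not well defined: $\sigma_P$ permutes $\alpha^*\cap L_P$ as a whole and may send points of one irreducible component $Z$ to points of a different component, so there is no canonical correspondence on $Z$ alone. Second, even for the correspondence on all of $\alpha^*$, the Zariski-density of orbits (the ``monodromy/connectivity'' you flag) is exactly what is unproven, and nothing in your proposal supplies it. Third, even if repaired, your route would only show that each component $Z$ is \emph{contained in} a level curve of $\Psi$, whereas the theorem asserts that each $\alpha_j^*$ is a finite union of \emph{entire} level curves. The paper closes the argument in the opposite direction, with two cited inputs: by \cite[proposition 2.7]{gl3}, every rational integral of the dual billiard -- in particular your $R$ -- is constant on each irreducible component of every level curve of the canonical integral $\Psi$; by \cite[lemma 2.8]{gl3}, the generic level curve of $\Psi$ is irreducible, so by passing to limits $R$ is constant on \emph{every} level curve. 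Since $\alpha_2^*$ is the zero locus and $\alpha_1^*$ the polar locus of $R$, each level curve of $\Psi$ meeting $\alpha_j^*$ lies entirely in it, so each $\alpha_j^*$ is a finite union of level curves of $\Psi(M)$, and dualizing gives the stated description of the caustics. Without these two results (or a completed proof of your density claim, suitably reformulated on all of $\alpha^*$), your proposal does not close.
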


\subsection{Projective billiards of dual pencil type as billiards on surfaces of constant curvature}

Recall that we identify the  plane $\rr^2_{x_1,x_2}$ with the plane $\{ x_3=1\}\subset\rr^3_{x_1,x_2,x_3}$: 
$$\text{to each point } \ x=(x_1,x_2)\in\rr^2 \ \text{ corresponds the point } \ r=(x_1,x_2,1).$$ 
The above plane is identified with the affine chart $\{ x_3\neq0\}\subset\rp^2_{[x_1:x_2:x_3]}$. 
Each affine line $L\subset\rr^2=\{ x_3=1\}$ is contained in a unique two-dimensional vector subspace in $\rr^3$. This is the subspace whose projectivization is $L$. 

Everywhere below for a curve $C\subset\rr^2$ and a point $x\in C$ by 
$H_C(x)\subset\rr^3$ we denote the above subspace corresponding to $L=T_xC$. For every line field $\mcn$ on $C$ 
by $H_{\mcn}(x)$ we denote the subspace corresponding to $L=\mcn(x)$. 

\begin{proposition} \label{ppencil} 1) Every dual pencil type projective billiard  on a conic $C\subset\rr^2_{x_1,x_2}=\{ x_3=1\}\subset\rr^3$ punctured in at most four base points, see Example \ref{defdp},  
is given by a transversal line field $\mcn$ defined by a real symmetric $3\times3$-matrix $\mca$ as follows. 
For every $x\in C$ the line $\mcn(x)$ is defined by the condition that the two-dimensional subspaces $H_C(x)$ and  $H_{\mcn}(x)$ in $\rr^3$ are {\bf $\mca$-orthogonal,} i.e., orthogonal in the scalar product 
$<\mca x,x>$.

2) Statement 1) holds for every real matrix $\mca$ 
 such that the complex conic $\alpha_\mca:=\{<\mca x,x>=0\}\subset\cp^2$ lies in the 
dual pencil defining the projective billiard and is different from $C$, see Example \ref{exdp}. 
In particular, the matrix $\mca$ can be chosen non-degenerate.

3) Up to projective transformation and restriction to a finite union of open arcs, every dual pencil  projective billiard is the projection of a  billiard on a surface of constant curvature, see the beginning of Subsection 1.3. 
\end{proposition}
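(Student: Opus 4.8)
The plan is to recognize the transversal field $\mcn$ of a dual pencil billiard as the $\mca$-orthogonality field attached to a suitable conic of the pencil, and then to read that field backwards through the construction of Subsection 1.3 as the $\pi$-projection of a metric-normal field on a quadric.

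I would treat Parts 1 and 2 simultaneously. In the notation of Example \ref{exdp}, fix $\mca = (U - \la A)^{-1}$ for a generic $\la \neq 0$; this matrix is symmetric and invertible (only finitely many members of a pencil are degenerate), and the conic $\alpha_\mca = \{\langle \mca x, x\rangle = 0\}$ is a non-degenerate member of the dual pencil \eqref{fordp}, distinct from $C = \mcc_0^*$. By Example \ref{exdp} (Desargues' involution theorem) every conic of the pencil is a caustic, so all of them induce one and the same reflection involution and hence the same field $\mcn$; I may therefore apply the pole characterization of Example \ref{defdp} with $\alpha_\mca$ playing the role of $S$. It states that $\mcn(x)$ joins $x$ to the $\alpha_\mca$-pole of the tangent line $T_xC$. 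Writing $T_xC = \{\langle \nu, \cdot\rangle = 0\}$, where $\nu$ is the Euclidean normal of the plane $H_C(x)$, that pole is $\pi(\mca^{-1}\nu)$, and $\mca^{-1}\nu$ spans exactly the $\mca$-orthogonal complement $H_C(x)^{\perp_{\mca}}$. Hence $H_{\mcn}(x) = \mathrm{span}\{r, H_C(x)^{\perp_{\mca}}\}$ contains $H_C(x)^{\perp_{\mca}}$, which for non-degenerate $\mca$ is precisely the assertion that $H_C(x)$ and $H_{\mcn}(x)$ are $\mca$-orthogonal. This proves Parts 1 and 2.

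For Part 3 I would normalize $\mca$ and invoke Subsection 1.3. By Sylvester's law of inertia there is a real $T \in GL_3(\rr)$ with $T^{-\top}\mca T^{-1}$ equal to $\mathrm{Id}$ or $\diag(1,1,-1)$; the induced projective transformation $[T]$ carries $C$, $\alpha_\mca$ and $\mcn$ to the corresponding data for the standard form, and carries the quadric $\{\langle \mca x, x\rangle = c\}$, for the appropriate sign $c \in \{1,-1\}$, to the round sphere or to the hyperboloid model of the hyperbolic plane from Subsection 1.3. On such a quadric $\Sigma$ the billiard reflection from the lifted curve uses the normal in the metric $\langle \mca\,dx, dx\rangle$; by the computation underlying Subsection 1.3 and Remark \ref{iicaust}, the metric reflection is the linear involution fixing $H_C(x)$ and with $(-1)$-eigenline $H_C(x)^{\perp_{\mca}}$, whose $\pi$-projection is exactly the projective reflection with eigenlines $T_xC$ and $\mcn(x)$. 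Thus, over any arc of $C$ that lifts to $\Sigma$, the dual pencil billiard coincides with the $\pi$-projection of the surface billiard.

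The remaining point, which is exactly what ``up to restriction to a finite union of open arcs'' absorbs, is reality and single-valuedness of the lift, and this is where I expect the only real care to be needed. A point $r = (x_1,x_2,1)$ lifts to $\{\langle \mca x, x\rangle = c\}$ only where $c\,\langle \mca r, r\rangle > 0$; since $C \neq \alpha_\mca$, the function $\langle \mca r, r\rangle$ is not identically zero on $C$, so after choosing $c$ equal to its sign on a component one obtains a nonempty union of open arcs, bounded by the finitely many points $C \cap \alpha_\mca$ --- which are in any case the pencil's base points, i.e.\ the projective billiard singularities already excised. On each such arc I would check that $\pi$ restricts to a diffeomorphism onto its image, selecting one of the two sheets in the sphere case, and that the reflected lines again lift to real geodesics of $\Sigma$, so that the identification of the two billiards is an equality of reflection laws and not merely of line fields. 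Collating these sign and sheet conditions over the at most four arcs cut out by the base points is the one genuinely delicate bookkeeping step; the rest is the formal dualization already packaged in Subsection 1.3 and Example \ref{exdp}.
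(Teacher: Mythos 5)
For Parts 1) and 2) your argument is correct but takes a genuinely different route from the paper's. You invoke the pole characterization of $\mcn$ quoted at the end of Example \ref{defdp} (the line $\mcn(x)$ joins $x$ to the pole of $T_xC$ with respect to the caustic conic), note via Example \ref{exdp} and Desargues' Theorem that every member of the dual pencil is a caustic and hence, by uniqueness of the projective involution fixing $T_xC$ and permuting a given pair of tangent lines, defines the same field $\mcn$ (off finitely many points, then by continuity), and finish with a two-line polarity computation: the pole of $T_xC=\{\langle\nu,\cdot\rangle=0\}$ with respect to $\alpha_\mca$ is $[\mca^{-1}\nu]$, which spans $H_C(x)^{\perp_\mca}$. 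This is clean and works verbatim for every invertible member of (\ref{fordp}) with $\la\neq0$, not only generic ones, so Statement 2) as stated follows too. The paper instead proves Claims 1 and 2 from scratch: the lifted reflection $I_X$ fixes $H_C(X)$ pointwise and permutes the two complex planes $\Pi_{\pm}(X)$ through $\La_X$ tangent to the cone $\{\langle\mca x,x\rangle=0\}$, and a uniqueness argument shows that any involution fixing $\La_X$ pointwise and permuting $\Pi_\pm(X)$ is an isometry of the form, from which the $\mca$-orthogonality falls out of the eigenstructure. What the paper's longer route buys is self-containedness — the pole description is only cited there (\cite{ccs}, \cite{fierobe-th}), not proved — and, more importantly, it directly establishes that the projective reflection lifts to an $\mca$-isometry of $\rr^3$, which is precisely what identifies it with the metric billiard reflection in Part 3; you assert that identification by appeal to ``the computation underlying Subsection 1.3,'' which leaves the isometry property implicit.

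Part 3, however, has a genuine gap in the indefinite case. After normalizing $\mca=\diag(1,1,-1)$, your recipe ``choose $c$ equal to the sign of $\langle\mca r,r\rangle$ on a component'' only produces a surface from Subsection 1.3 when that sign is negative: for $c=+1$ the quadric $\{\langle\mca x,x\rangle=1\}$ is the one-sheeted hyperboloid, whose induced metric is Lorentzian (de Sitter), not one of the three Riemannian surfaces of constant curvature, so arcs lying in $\{\langle\mca r,r\rangle>0\}$ are not realized by your construction — and with your single generic $\mca$ fixed at the outset it can happen that all of $C$ lies in that region, leaving you with no admissible arcs at all. The paper closes exactly this hole by a step you never take: it exploits the freedom of choosing which regular conic $\alpha\neq C$ of the dual pencil plays the role of the absolute, picking one whose convex side $W\subset\rp^2$ meets $C$ (or contains it), and then restricts to $C'=C\cap W$; that choice-plus-restriction, not a sign-by-sign case split with a fixed $\mca$, is what the phrase ``up to restriction to a finite union of open arcs'' absorbs. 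Separately, your parenthetical that the endpoints $C\cap\alpha_\mca$ ``are in any case the pencil's base points, already excised'' is false: the base set $\mcb$ consists of the tangency points of the four common tangent lines of the pencil, not of intersection points of two members — for confocal conics (a dual pencil) an ellipse meets a confocal hyperbola in four real points, while the base points, the tangencies with the isotropic common tangents, are complex. This does not affect the finiteness you need, but the excision of $C\cap\alpha_\mca$ is a new restriction, not one already made.
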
 
\begin{remark} \label{remort} In the case, when $\alpha_\mca=C$, for every $x\in C$ the corresponding vector $r$ is $\mca$-orthogonal to the subspace $H_C(x)$. For every line $\mcn(x)$ through $x$ the corresponding subspace $H_{\mcn}(x)$ is 
$\mca$-orthogonal to $H_C(x)$.
\end{remark}

\subsection{Historical remarks}
V.Lazutkin \cite{laz} proved that every strictly convex bounded planar billiard with sufficiently smooth boundary has uncountably many, continuum of  closed caustics.  H.Poritsky \cite{poritsky}, and later 
E.Amiran \cite{amiran} 
 proved the Birkhoff Conjecture under the additional assumption that for every two nested caustics the smaller one is a caustic for the billiard in the bigger one. M.Bialy \cite{bialy} proved that if the phase cylinder of a billiard is 
 foliated  by non-contractible invariant closed curves,  then the billiard boundary is a circle. See also \cite{wojt} for another proof.  
  Bialy proved  similar results  for billiards on  constant curvature surfaces \cite{bialy1} and  for magnetic billiards 
 \cite{bialy2}.  D.V.Treschev's numerical experience indicated that 
 there should exist billiards with reflections from two germs of analytic curves 
 having a  two-periodic point where the  germ of the second iterate of reflection is analytically conjugated to a disk rotation:  see \cite{treshchev} and also  \cite{tres2,tres3}. 
   V.Kaloshin and A.Sorrentino \cite{kalsor} proved that {\it an integrable deformation of 
 an ellipse is an ellipse,} under the condition that the family of caustics extends up to a caustic tangent to triangular orbits. See also \cite{kavila} for a similar previous result for ellipses with small excentricities. A generalization of the result of Kaloshin and Sorrentino, without extension condition but for a generic ellipse, was recently proved by I.Koval \cite{koval}. M.Bialy and A.E.Mironov \cite{bm6} 
 proved the Birkhoff Conjecture for centrally-symmetric 
 billiards having a family of closed caustics that extends up to a caustic tangent to four-periodic orbits. 
 For a survey on the Birkhoff Conjecture and related results see \cite{bols, kalsor, KS18, bm6} and 
 references therein. 
 
It was shown by the author \cite{glcaust} that every convex $C^{\infty}$-smooth non-closed planar 
 curve with positive curvature has an adjacent domain that admits a  continuum of 
 distinct $C^\infty$-smooth foliations by non-closed caustics, with the boundary being a leaf. This extends R.Melrose's result  \cite{melrose1}.

 A.P.Veselov obtained a series of complete integrability results  and description of billiard orbits via a shift of Jacobi variety for billiards bounded by confocal quadrics 
 in space forms of any dimension. 

For a survey on the Bolotin's polynomial version of the Birkhoff  Conjecture, its version for magnetic billiards and related results 
see  \cite{bm, bm2, bm5}  and references therein and also the book \cite{kozlov}.

M.Berger have shown that in Euclidean space $\rr^n$ with $n\geq3$ the only hypersurfaces admitting caustics are quadrics \cite{berger87}. This result was extended to space forms of constant curvature of dimension greater than two by the author of the 
present paper \cite{commute}. 

S.Tabachnikov \cite{tabpr}  introduced projective billiards and proved a criterium and a necessary condition 
for a planar projective billiard to preserve an area form. He proved that every  projective billiard on the unit circle preserves one and the same, but infinite canonical area form. He proved that if a projective billiard on a circle
preserves an area form that is smooth up to the boundary of the phase cylinder, then the billiard is integrable. 

It was shown in \cite{gs} that each polynomially integrable outer billiard is a conic. Generalizations of this result to  rationally integrable dual and projective billiards and their classifications were obtained in \cite{grat, gl3}.
%

\section{Proofs of main results}
In the proofs of Theorems \ref{talg1}, \ref{talg2} we use the next proposition.
\begin{proposition} \label{ratpol}  \cite[proposition 1.8]{gl3}. A non-polygonal billiard with piecewise $C^2$-smooth boundary 
on a surface of constant
curvature is rationally 0-homogeneously integrable, if and only if it is polynomially integrable. Then the minimal degree of a rational 0-homogeneous integral is equal to either 2, or 4: to the minimal degree of a non-trivial nonlinear polynomial integral.
\end{proposition}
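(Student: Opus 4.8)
The plan is to reduce both integrability notions to one and the same invariance condition on a function of the moment vector, dispose of the easy implication, and then concentrate on the hard one. First I would invoke S.\,Bolotin's observation (Remark \ref{rlev}): every first integral of the billiard flow, polynomial or rational $0$-homogeneous, is a function of the moment vector $M=[r,v]$ alone. The reason is that $M$ is preserved by the geodesic flow, i.e.\ by the free motion between two consecutive reflections, so the only genuine constraint on an integral is invariance under the reflection at each boundary point. Passing to the dual (Bialy--Mironov angular) billiard on $\gamma=C^*$ by the orthogonal polarity, and using Proposition \ref{procriter}, this becomes the statement that a function $\Phi(M)$ is an integral if and only if for every $P\in\gamma$ its restriction $\Phi|_{L_P}$ is invariant under the projective involution $\sigma_P$ of the tangent line $L_P$. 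The distinguished object here is the quadratic form $G$ defining the absolute $\ii=\{G=0\}$: it is always a trivial integral (in the plane it is literally $|v|^2=M_1^2+M_2^2$, and in the spherical and hyperbolic cases $\ii$ is an invariant curve by Remark \ref{iicaust} and Example \ref{exbm}).

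The implication ``polynomial $\Rightarrow$ rational'' is then immediate. The homogeneous-in-$v$ components of a polynomial integral are again integrals, since both the reflection and the geodesic-flow constraint are linear; so one may take a nontrivial homogeneous integral $H(M)$ that is not a polynomial in $G$. Because $G$ is a trivial integral, the ratio $R:=H/G^{\deg H/2}$ (for even $\deg H$; otherwise $H^2/G^{\deg H}$) is a non-constant $0$-homogeneous rational integral. Hence the minimal rational degree does not exceed the minimal even nonlinear polynomial degree.

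The content of the proposition is the converse, which is where I expect the real difficulty. Let $R=A/B$ be a rational integral with $A,B$ coprime and homogeneous of common degree $d$. For each point of $\gamma$ the involution $\sigma_P$ permutes the zeros, respectively the poles, of $R|_{L_P}$, so both $\{A=0\}$ and $\{B=0\}$ are invariant curves of the dual billiard. Moreover $R|_{L_P}$ is $\sigma_P$-invariant, hence factors through the degree-two quotient $L_P\to L_P/\sigma_P\cong\cp^1$, which already forces $d$ to be even. The crux is to normalize the polar locus: I would post-compose $R$ with a fractional-linear transformation (which changes neither the integral property nor the degree) so as to make its polar divisor a multiple of the absolute, namely $d/2$ copies of $\ii$. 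The numerator of the normalized integral is then a homogeneous polynomial $H(M)$ of degree $d$ that is $\sigma_P$-invariant for all $P$, i.e.\ a polynomial integral, and it is nontrivial because $R$ is non-constant. This yields the reverse inequality between minimal degrees, hence the equality ``minimal rational degree $=$ minimal nonlinear polynomial degree''.

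The main obstacle is precisely this normalization: one must rule out that a minimal rational integral has poles along invariant curves other than the light curve $\ii$. I expect this to require a description of all invariant curves of the one-parameter family of involutions $(\sigma_P)_{P\in\gamma}$, together with the hypothesis that $C$ is non-polygonal, so that the underlying arc carries nonvanishing curvature and the involutions $\sigma_P$ vary nontrivially with $P$. Finally, that the resulting common minimal degree equals $2$ or $4$ is not formal; I would deduce it from the solution of Bolotin's polynomial integrability conjecture \cite{bm, bm2, gl2}, which shows that an integrable curve is a conic and pins the minimal degree of a nontrivial nonlinear polynomial integral to $2$ or $4$.
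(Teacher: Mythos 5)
You have correctly identified where the difficulty lies, but the crux of your argument is missing, not merely deferred. (Note also that the paper itself gives no proof of Proposition \ref{ratpol}: it is imported from \cite[proposition 1.8]{gl3}, with Remark \ref{rratpol} only refining the degree statement; so your attempt must be measured against the cited argument.) Your reduction to functions of the moment vector (Remark \ref{rlev}), the easy implication ``polynomial $\Rightarrow$ rational $0$-homogeneous'' via $H/G^{\deg H/2}$ or $H^2/G^{\deg H}$, and the final appeal to the solved Bolotin conjecture for the values $2$ or $4$ are all sound. But the converse is not proved: your proposed normalization --- post-composing $R$ with a fractional-linear map so that the polar divisor becomes a multiple of the absolute --- is not available as stated. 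It presupposes, first, that $\ii$ lies in a single level set of $R$, and second, that this level set contains nothing besides $\ii$; if the polar level has another component $Q$, the normalized integral is $H/(G^kQ)$ and no polynomial integral emerges from its numerator. In the planar case the obstruction is concrete: $\ii=\{M_1^2+M_2^2=0\}$ is a pair of lines, and invariance of $R|_{L_P}$ under $\sigma_P$ only forces $R$ to be constant on each line separately, a priori with two distinct values, so no single M\"obius map pushes both to $\infty$. Ruling all this out requires describing the invariant curves of the family $(\sigma_P)$, which is essentially the classification one is trying to establish --- you acknowledge this, but it means the central step is absent.

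The step can in fact be done with no normalization at all, which is how the direct argument goes (cf.\ the discussion on p.~1004 of \cite{gl2} and \cite{gl3}). The reflection at a boundary point acts on moment vectors by a \emph{linear} involution $\mathbf I_P$ of the two-dimensional subspace $W_{L_P}\subset\rr^3$ whose projectivization is $\sigma_P$, and every function of $M$ is automatically conserved along free flight. Write $R=A/B$ with $A$, $B$ coprime homogeneous of the same degree. For all but finitely many $P$ on each smooth arc the line $L_P$ avoids the finite base locus $\{A=B=0\}$, so the restrictions $A|_{W_{L_P}}$, $B|_{W_{L_P}}$ are coprime binary forms; then the identity $(A/B)\circ\mathbf I_P=A/B$ on $W_{L_P}$ forces $A\circ\mathbf I_P=\eps A$ and $B\circ\mathbf I_P=\eps B$ there, with a common sign $\eps=\pm1$. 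Hence $A^2$, $B^2$ and $AB$ are invariant under every $\mathbf I_P$ (for generic $P$, then for all $P$ by continuity), i.e.\ they are polynomial integrals of the billiard flow; at least one is nontrivial, since if all were proportional to powers of the form $G$ defining the absolute, then $R^2$, and hence $R$, would be constant. This yields polynomial integrability directly, and the degree bookkeeping (minimal degree $2$ or $4$, with equality of the minimal rational and polynomial degrees after dividing by the appropriate power of $G$) then follows from the classification \cite{bm,bm2,gl2}, exactly as in your final step and in Remark \ref{rratpol}. Replacing your normalization paragraph by this coprimality-plus-linearity argument would close the gap.
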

\begin{remark}  \label{rratpol} In the above theorem in the case, when the boundary is connected and $C^2$-smooth, it is a conic, by the solution of the 
Bolotin's Conjecture \cite{bm, bm2, gl2}. Then the minimal degree of a non-linear polynomial integral is equal to two, see  \cite{bolotin2}.  Therefore, the minimal degree of a rational 0-homogeneous integral is also equal to two. 
\end{remark}

\subsection{Case of planar billiards. Proof of Theorem \ref{talg1}}
Let $C$ be a $C^2$-smooth curve in the Euclidean plane $\rr^2_{x_1,x_2}$ identified with the affine chart $\{ x_3=1\}$ in 
$\rp^2_{[x_1:x_2:x_3]}$, 
and let $\gamma=C^*\subset\rp^2_{[M_1:M_2:M_3]}$ be its dual curve with respect to the orthogonal polarity. 
Recall that the dual to the billiard on the curve $C$ is the dual billiard on $\gamma$ given by Bialy--Mironov 
angular symmetries $\sigma_P:L_P\to L_P$, $P\in\gamma^*$, see Example \ref{exbm}. 

\begin{proposition} \label{pinvlp} Let $C$, $\gamma$, $\sigma_P$ be as above. 
 Let  the billiard on $C$ have a complex caustic  $\alpha$. Let $\alpha^*$ be its   projective dual, and let 
$H(M_1,M_2,M_3)$ be its defining homogeneous polynomial: $\alpha^*=\{ H=0\}$, and $H$ has the minimal possible 
degree. Set 
\begin{equation} d:=\deg H, \ \  R(M_1,M_2,M_3):=\frac{H^2(M_1,M_2,M_3)}{(M_1^2+M_2^2)^{d}}.\label{zdiv}
\end{equation}
The function $R$, which is a well-defined rational function on $\rp^2_{[M_1:M_2:M_3]}$,  
is an integral of the dual billiard on $\gamma$.
\end{proposition}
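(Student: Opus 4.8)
The plan is to prove exactly that $R$ is invariant under each dual-billiard involution, i.e. that $R\circ\sigma_P=R$ on $L_P$ for every $P\in\gamma$; by definition this is what it means for $R$ to be an integral of the dual billiard. First note that $R$ is $0$-homogeneous, since $\deg H^2=2d=\deg(M_1^2+M_2^2)^d$, so it descends to a well-defined rational function on $\rp^2_{[M_1:M_2:M_3]}$. Because $H$ is taken of minimal degree, it is squarefree and $\{H=0\}=\alpha^*$ as a reduced curve; hence on a line $L_P$ the zero divisor of $H|_{L_P}$ is the honest intersection divisor $\alpha^*\cap L_P$, while the polar divisor of $R|_{L_P}$ is supported on the two points $z_\pm:=L_P\cap\{M_1^2+M_2^2=0\}$, each with multiplicity $d$. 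These two points are distinct: $z_+=z_-$ would force $L_P$ through the node $O=[0:0:1]$ of $\{M_1^2+M_2^2=0\}$, i.e. $OP=L_P$, which is excluded in Example~\ref{exbm}.

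The argument rests on two geometric facts about how $\sigma_P$ acts on this divisor. First, since $\alpha$ is a complex caustic of the billiard on $C$, its dual $\alpha^*$ is an invariant curve of the dual (Bialy--Mironov) billiard on $\gamma$, by the duality recalled before the proposition; thus $\sigma_P$ permutes the points of $\alpha^*\cap L_P$. Second, $\sigma_P$ interchanges the two absolute points $z_+$ and $z_-$. Indeed, $\{M_1^2+M_2^2=0\}$ is the union of the two isotropic lines $\{M_1=\pm iM_2\}$ through $O$, so $Oz_+$ and $Oz_-$ are precisely these two isotropic lines; any real reflection about a line through $O$ is orientation-reversing and therefore interchanges the two circular points, hence interchanges the two isotropic lines. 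In particular $Oz_+$ and $Oz_-$ are symmetric with respect to $OP$, which by the definition of the Bialy--Mironov angular symmetry means exactly $\sigma_P(z_+)=z_-$. (This is the dual counterpart of Remark~\ref{iicaust}.)

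With these two facts I would finish by a direct computation in a coordinate adapted to $\sigma_P$. Choose an affine coordinate $t$ on $L_P\cong\cp^1$ placing the two fixed points of the involution at $t=0$ and $t=\infty$, so that $\sigma_P$ becomes $t\mapsto -t$. Then $z_\pm$, being interchanged, sit at $t=\pm a$ with $a\neq0$, and the restriction of the quadratic form is $(M_1^2+M_2^2)|_{L_P}=\kappa\,(t^2-a^2)$ with $\kappa\neq0$; in particular the denominator of $R|_{L_P}$ is a function of $t^2$ and is $\sigma_P$-invariant. The numerator is $\big(H|_{L_P}(t)\big)^2$, whose roots are the points of $\alpha^*\cap L_P$. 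By the first fact this multiset of roots is invariant under $t\mapsto -t$, whence $H|_{L_P}(-t)=\pm H|_{L_P}(t)$ and therefore $\big(H|_{L_P}(-t)\big)^2=\big(H|_{L_P}(t)\big)^2$. Consequently $R|_{L_P}(-t)=R|_{L_P}(t)$, i.e. $R\circ\sigma_P=R$ on $L_P$. Squaring $H$ is precisely what makes the argument uniform: when $H|_{L_P}$ is odd in $t$ the unsquared ratio would change sign, so $H^2$ is needed to absorb this ambiguity.

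The one point that needs care — and which I expect to be the main obstacle — is that the invariant-curve property a priori only asserts that $\sigma_P$ permutes $\alpha^*\cap L_P$ as a \emph{set}, whereas the computation uses invariance of the root multiset \emph{with multiplicities}. I would handle this by genericity and continuity. For $P$ in a dense open subset of $\gamma$ the tangent lines drawn from $Q=L_P^*\in C$ to $\alpha$ are ordinary simple tangents, so $L_P$ meets $\alpha^*$ transversally, all intersection multiplicities equal $1$, and $L_P\not\subset\alpha^*$; for such $P$ set-invariance and multiset-invariance coincide and the argument above yields $R\circ\sigma_P=R$. Since $R$ is a fixed rational function and the family $(P,x)\mapsto(P,\sigma_P(x))$ is algebraic in $P$, the identity $R\circ\sigma_P=R|_{L_P}$ extends from this dense set to all $P\in\gamma$ by continuity, completing the proof. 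Alternatively, once equality of the divisors of $R|_{L_P}$ and $R|_{L_P}\circ\sigma_P$ is established one gets $R\circ\sigma_P=cR$ and fixes $c=1$ by evaluating at the fixed point $P$, where $R(P)\neq0,\infty$ for generic $P$.
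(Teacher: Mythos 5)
Your argument is correct in substance and follows essentially the same route as the paper's proof: both rest on the two facts that $\sigma_P$ preserves the zero locus $L_P\cap\alpha^*$ (a complex caustic dualizes to an invariant curve) and permutes the two pole points $L_P\cap\ii$, $\ii=\{M_1^2+M_2^2=0\}$, and both remove the residual ambiguity by evaluating at the fixed point $P$ for generic $P$. The paper runs the conclusion through the divisor comparison you mention only as an ``alternative'' (equal zero and pole divisors force $R\circ\sigma_P=\pm R$ on $L_P$, since $\sigma_P$ is an involution, and $(R\circ\sigma_P)(P)=R(P)\neq 0,\infty$ fixes the sign), and it cites \cite{bm} and \cite[proposition 2.18]{gl2} for the behaviour of $\sigma_P$ on the isotropic points, where you derive the swap directly from the definition of the angular symmetry; your adapted coordinate $t\mapsto-t$ computation is an equivalent, more hands-on version of the same step.

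One claim in your careful step needs repair. You rightly flag that invariance of $\alpha^*\cap L_P$ is a priori only set-theoretic while the computation needs the root multiset with multiplicities, and you propose genericity: for $P$ in a dense open subset of $\gamma$ all intersections of $L_P$ with $\alpha^*$ are simple. (The paper, for what it is worth, passes from locus invariance to divisor invariance without comment.) But your transversality claim fails identically, not just on a thin set, in a case the paper explicitly allows: if $C$ is algebraic and a component of $\alpha$ is the complexification $C_\cc$ (see the remark after Theorem~\ref{talg1}, where $C_\cc$ is noted to be a complex caustic of a conic $C$), or more generally if an arc of $C$ lies in $\alpha$, then $\gamma\subset\alpha^*$ along the dual arc and $L_P$ is tangent to $\alpha^*$ for \emph{every} such $P$, so a multiplicity-two point persists on a whole arc; your continuity extension cannot reach that arc from the good set, and since $C$ is only finitely smooth there is no analytic continuation in $P$ either. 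The fix is one line: in this situation the tangency point of $L_P$ with $\alpha^*$ is the point $P$ itself (it is dual to $T_QC=T_Q\alpha$, while $L_P=Q^*$), i.e.\ it is the fixed point of $\sigma_P$; for generic $P$ on the arc the remaining intersection points are simple, so the divisor $2P+(\text{simple points})$ is still invariant as a multiset and your parity computation goes through unchanged.
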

\begin{proof} Set 
$$\ii:=\{ M_1^2+M_2^2=0\}\subset\cp^2_{[M_1:M_2:M_3]}.$$
For every $P\in\gamma$ the complexification of the angular symmetry $\sigma_P:L_P\to L_P$ is the projective involution of 
 the complexified  line $L_P$ that fixes the point $P$ and permutes its intersection points with $\ii$, see \cite{bm}, \cite[proposition 2.18]{gl2}. Thus, it leaves invariant  the pole locus $L_P\cap \ii$  of the restriction $R|_{L_P}$ and its zero locus 
 $L_P\cap \alpha^*$. This implies that the pole and zero divisors of the restriction $R|_{L_P}$ are $\sigma_P$-invariant. 
 Therefore, the restriction $R|_{L_P}$ and its pullback under the involution $\sigma_P$ have the same pole and zero divisors. 
 Thus, the involution $\sigma_P$ either fixes it, or changes its sign, since two rational functions of one variable having the same 
 pole and zero  divisors differ by a constant factor. For a generic $P\in\gamma$ one has $R(P)\neq0,\infty$, and $\sigma_P$ is well-defined and fixes $P$. Hence, $(R\circ\sigma_P)(P)=R(P)$, and thus, $R|_{L_P}$ is $\sigma_P$-invariant. This implies that 
 $R$ is an integral of the dual billiard and proves Proposition \ref{pinvlp}.
\end{proof} 

Thus, the dual billiard structure on $\gamma$ is rationally integrable. Therefore, the initial billiard  is rationally 0-homogeneously integrable, by Proposition \ref{procriter}. Hence, it is polynomially integrable, by Proposition \ref{pinvlp}. Thus, $C$, and hence 
$\gamma$ is a conic.

Let us now prove that $\alpha$ is a finite union of conics confocal to $C$.  The minimal degree of a rational 0-homogeneous integral of the billiard on $C$ is equal to two, see Remark \ref{rratpol}. 
Hence, the minimal degree of a rational integral of the  Bialy--Mironov angular billiard on $\gamma$ 
is also equal to two, by Proposition \ref{procriter}. Let $\mcc$ denote the pencil of conics containing $\gamma$ and the absolute $\ii$.  
Each conic in $\mcc$ is an invariant curve for the dual billiard on $\gamma$: 
for every $P\in\gamma$ the involution $\sigma_P:L_P\to L_P$, whenever well-defined, permutes the intersection points. 
This follows from the same statement for the conics $\gamma$ and $\ii$ generating the pencil $\mcc$ 
and Desargues' Theorem, see  \cite{berger87}. This also follows from the well-known facts that confocal conics to $C$ are caustics of the 
billiard on $C$ and the projective duality given by the orthogonal polarity transforms them to 
conics of the pencil $\mcc$, see, e.g., \cite{bm, gl2}.  Therefore, every quadratic rational function $\Psi$ whose level curves are 
the conics of the pencil $\mcc$ is an integral of the angular billiard on $\gamma$. But the angular billiard has yet another 
rational integral $R$, whose zero locus is the algebraic curve $\alpha^*$. On the other hand, $R$ is constant on each 
irreducible component of every level curve of the quadratic integral  $\Psi$, see \cite[proposition 2.7]{gl3}. 
Thus, $\alpha^*$ is a finite union of conics of the 
pencil $\mcc$, and $\alpha$ is a finite union of confocal conics to $C$. Theorem \ref{talg1} is proved.

\subsection{Billiards on surfaces of constant non-zero curvature. Proof of Theorem \ref{talg2}} 
Let $\Sigma\subset\rr^3_{x_1,x_2,x_3}$ be a surface from Subsection 1.2 of non-zero constant curvature: either the unit sphere in the Euclidean space $\rr^3$, or the upper half of pseudosphere of squared radius -1 in the Minkowski space. 
Let  $\ii\subset\cp^2$ be the corresponding absolute. Recall that it is 
 a regular conic, and it is a complex caustic of every billiard on $\Sigma$, 
see Remark \ref{iicaust}. 

Let $C\subset\Sigma$ be a  $C^2$-smooth curve. For simplicity let us first consider that $C$ 
is bijectively tautologically projected onto a curve in the standard  affine chart $\{ x_3\neq0\}\subset\rp^2$, also denoted 
by $C$. This is always the case if $\Sigma$ is the hyperbolic plane. 
Consider the billiard on $C$ as a projective billiard, see the beginning of Subsection 1.3. 
 The projective duality given by the orthogonal polarity preserves the absolute $\ii$, i.e., it is self-dual, and 
 transforms the billiard on  $C\subset\Sigma$ to a special dual billiard 
 on the dual curve $\gamma=C^*$: the so-called $\ii$-angular billiard, see \cite[subsection 2.1]{gl2}. Namely, for 
 every $P\in \gamma$ the corresponding complexified involution $\sigma_P:L_P\to L_P$ is the unique projective 
 involution fixing $P$ and permuting the points of intersection $L_P\cap\ii$. Note that the duality between billiards in $\Sigma$ 
 and $\ii$-angular billiards, as presented in \cite[subsection 2.1]{gl2}, does not use the 
 assumption that the projected curve lies in the standard affine chart. 

Let now the billiard on a $C^2$-smooth connected curve $C\subset\Sigma$ have a complex caustic $\alpha\neq\ii$. 
Then the corresponding $\ii$-angular billiard has two different complex invariant curves: $\ii$ and $\alpha^*$. 
Without loss of generality we can and will consider that each irreducible component of the curve $\alpha^*$ 
is different from $\ii$. Indeed, replacing $\alpha^*$ by the union of its components different from $\ii$ yields a complex caustic, 
since $\ii$ is a complex caustic and the number of lines tangent to $\ii$ and to a component of $\alpha^*$ different from 
$\ii$ is finite. Fix a rational 0-homogeneous 
function $R(M_1,M_2,M_3)$ on the ambient projective plane of the dual billiard that has poles and zeros exactly on $\ii$ 
and $\alpha^*$ respectively, so that the order of zero is the same on all the irreducible components of the curve $\alpha^*$. 
For every point $P\in\gamma$ the restriction $R|_{L_P}$ is $\sigma_P$-invariant, as in the previous subsection. Therefore, 
$R$ is an integral of the $\ii$-angular billiard. Therefore, the billiard on $C$ is rationally 0-homogeneously integrable, by 
Proposition \ref{procriter}. Hence, it is polynomially integrable, by Proposition \ref{ratpol}. 
Therefore, $C$ is a conic, and the  billiard has a quadratic integral, 
by the solution of the Bolotin's  Conjecture 
\cite{bm, bm2, gl2}. The dual billiard on $\gamma$ has a quadratic rational integral whose level curves are 
conics forming a pencil containing $\gamma$ and $\ii$, as in the previous 
subsection. Their dual conics are conics confocal to $C$. Then $\alpha^*$ is a union of a finite number of conics of the latter pencil, which follows from \cite[proposition 2.7]{gl3}, as in the previous subsection. 
Hence, $\alpha$ is a finite union of complex conics confocal to $C$. Theorem \ref{talg2} is proved.

 \subsection{Case of projective billiards. Proof of Theorem \ref{talg3}}
 Let a projective billiard on a $C^4$-smooth curve $C\subset\rr^2_{x_1,x_2}=\{ x_3=1\}\subset\rr^3_{x_1,x_2,x_3}$ 
 have two different complex  caustics $\alpha_1$ and $\alpha_2$. Then the corresponding dual billiard on $\gamma=C^*$  has two different complex algebraic invariant curves $\alpha_1^*,\alpha_2^*\subset\cp^2_{[M_1:M_2:M_3]}$. 
 Without loss of generality we consider that the  curves $\alpha_1^*$, $\alpha_2^*$ have no common irreducible component. Indeed,  the union $A$ of their common irreducible components is a complex caustic. Therefore, replacing each $\alpha_j^*$  
 by the union of its components that are not contained in $A$ yields a complex caustic, which will now be denoted 
 by $\alpha^*_j$, as in the previous subsection. 
  Fix a rational 0-homogeneous function $R(M_1,M_2,M_3)$, whose pole and zero divisors as of a function on $\cp^2$ are 
formal sums of irreducible components of the curves  $\alpha_1^*$ and $\alpha_2^*$ respectively, multiplied by  coefficients 
$c_1\in-\nn$, respectively $c_2\in\nn$. 
Then it is a rational integral of the dual billiard, as in the previous subsection. 
Therefore, the projective billiard on $C$ is rationally 0-homogeneously integrable, by Proposition \ref{procriter}. 
Hence, $C$ is a conic, and up to projective transformation, the projective billiard belongs to the list given by  
 Theorem \ref{tgermproj}. The corresponding function $\Psi(M)$ from its Addendum, treated as a function 
 on $\cp^2_{[M_1:M_2:M_3]}$, is a rational integral of the dual billiard, by Proposition \ref{procriter}. It is called a {\it canonical integral} of the dual billiard, see \cite[section 2]{gl3}. Every other rational integral of the dual billiard, in particular $R$, is constant on each irreducible component of every level curve of the canonical 
 integral $\Psi$, by \cite[proposition 2.7]{gl3}.  Its generic level curve  is irreducible, by 
 \cite[lemma 2.8]{gl3}. Therefore $R$ is constant on every its level curve, since each reducible level curve is a limit of irreducible ones. Hence, each $\alpha_j^*$ is 
 a finite union of level curves of the canonical integral $\Psi(M)$. Thus, its dual curve $\alpha_j$ is a finite union of  curves dual to its level curves. Theorem \ref{talg3} is proved. 

\subsection{Tangential correspondence. Proof of Proposition \ref{prtang}}
The tangential correspondence $\mct$ is complex one-dimensional.  Let $\mct_0\subset\mct$ denote the subset consisting of those pairs $(A,B)$ 
for which the image of the line $AB$ by reflection from the curve $C$ at $A$ is a line tangent to $\alpha$. Note that in order that the reflection at $A$ be well-defined, we have to exclude from $\wt C$ the isotropic tangency points: those points $A$ at which the tangent line to $C$ is either the infinity line, or an isotropic line, i.e.,  parallel to some of the vectors $(1,\pm i)$, see \cite{alg}. 
Let $\wt C'$ denote the curve $\wt C$ punctured at the latter points. Let $\mct'$ denote the correspondence $\mct$ 
punctured in the finite set of those points $(A,B)$ for which $A$ is an isotropic tangency point.  
The subset $\mct_0$ is an analytic subset in  $\wt C'\times\wt\alpha$. It contains the real one-dimensional family of points $(A,B)$ in the products of the real ovals such that the line $AB$ is tangent to $\alpha$ at $B$. Indeed, real tangent lines to $\alpha$ are reflected to its real tangent lines, since $\alpha$ is a real caustic. Therefore,  $\mct_0$ is a one-dimensional analytic subset contained in an irreducible one-dimensional analytic set $\mct'$. Hence, $\mct_0=\mct'$, and thus, for every real point $A\in C$ and 
every $B\in\wt\alpha$ such that the $AB$ is tangent to $\alpha$ at $B$ the  reflection at $A$ sends $AB$ to 
a line tangent to $\alpha$. Thus, $\alpha$ is a complex caustic. Proposition \ref{prtang} is proved.

\subsection{Dual pencil type projective billiards. Proof of Proposition \ref{ppencil}}

Let $C\subset\rr^2=\{ x_3=1\}\subset\rp^3$ be a regular conic. Let $S\subset\rr^2$ be another regular conic, $\mathcal B\subset C$ 
be the subset of those points $X$ such that the line $L_X$ tangent to $C$ at $X$ is also  tangent to $S$. 
Consider the corresponding dual pencil type projective billiard on $C^o=C\setminus\mathcal B$, defined to have $S$ as a caustic. 
Let $\mca$ denote the real symmetric 3$\times$3-matrix defining a conic $\alpha_\mca$ from the dual pencil generated by $C$ and $S$: one has $\alpha_\mca=\{<\mca x,x>=0\}$. Without loss of generality we consider that $\alpha_\mca\neq C$, see 
Remark \ref{remort}.

Consider the space $\rr^3$ equipped with the quadratic form $<\mca dx,dx>$. Let $\mcn$ denote 
the transversal line field on $C^o$ defining the above dual pencil type projective billiard. Recall that for every $X\in C^o$ we denote by  
$H_\mcn(X), H_C(X)\subset\rr^3$  the two-dimensional vector subspaces containing respectively the line $\mcn(X)$ 
and the line $L_X$. Their intersection is the one-dimensional vector subspace containing $X$. It will be  denoted by $\La_X$. 

\smallskip
{\bf Claim 1.} {\it For every $X\in C^o\setminus S$ the subspaces $H_{\mcn}(X)$, $H_C(X)$ are $\mca$-orthogonal, i.e., orthogonal with respect to the quadratic form  $<\mca x,x>$.}

\begin{proof} Without loss of generality we can and will consider that $\La_X$ is not self-$\mca$-orthogonal, i.e., 
$<\mca x,x>\not\equiv 0$ on $\La_X$. This is true for a generic point $X\in C^o$, since $\alpha_\mca\neq C$. 
 The general statement then follows by 
passing to limit. 
Let us consider the ambient plane $\rr^2$ of the projective billiard as the standard affine chart $\{ x_3\neq0\}\subset\rp^2_{[x_1:x_2:x_3]}$. The projective billiard reflection at $X$ acts on  lines through $X$ by an affine involution $\rr^2\to\rr^2$  fixing  the points of the line $L_X$. It  is the projectivization of a linear involution $I_X:\rr^3\to\rr^3$ fixing the points 
of the subspace $H_C(X)$ and fixing the subspace $H_\mcn(X)$. The involution $I_X$ has  double eigenvalue 1 corresponding to the invariant subspace $H_C(X)$ and a simple eigenvalue -1 corresponding to a one-dimensional vector subspace in 
$H_\mcn(X)$, since it acts on the quotient $H_\mcn(X)\slash\La_X\simeq\mcn(X)$ as the central symmetry $v\mapsto-v$. Its complexification permutes the two complex 2-dimensional vector subspaces $\Pi_{\pm}(X)\subset\cc^3$ containing $\La_X$ that are tangent to the complex cone  $K:=\{<\mca x,x>=0\}\subset\cc^3$. Indeed,  $\Pi_{\pm}(X)$ are distinct, since 
$\La_X\not\subset K$. The involution $I_X$ fixes their union,  since $\alpha_A$ is a caustic. If, to the contrary, $I_X$ fixed each of them, then the above 
reflection involution would fix three distinct complex lines through $X$: the complexified line $L_X$ and the projectivizations of the subspaces $\Pi_{\pm}(X)$. Hence, it acts trivially on the space $\cp^1$ of lines through $X$ in $\cc^2\subset\cp^2$ and 
fixes the points of the line $L_X$.  Thus, it is the identity, -- a contradiction, since it acts on $\mcn(X)$ by central symmetry.

{\bf Claim 2.} {\it Every linear involution $I:\rr^3\to\rr^3$ fixing points of the line $\La_X$ and permuting $\Pi_{\pm}(X)$ is an isometry of the quadratic form 
$<\mca x,x>$.}

\begin{proof} The subspaces $\Pi_{\pm}(X)$ are exactly those two-dimensional subspaces $\Pi$ containing $\La_X$ on which 
the form $<\mca x,x>$ is degenerate. The kernel of its restriction to $\Pi$ is  the one-dimensional subspace along which $\Pi$ is 
tangent to $\alpha_A$. It is different from the line $\La_X$, by assumption. Thus, the union  $\cup_{\pm}\Pi_{\pm}(X)$ is uniquely defined by  $X$ and the quadratic form. 
The involution $I$ is non-trivial, and  $\La_X$ is its eigenline with unit eigenvalue. The eigenvalues of its action on the 
quotient $\rr^3\slash \La_X$ are 1 and -1. Indeed, otherwise they would be equal, and hence, the involution $I$ would fix each $\Pi_{\pm}(X)$, 
which is impossible. Let $V\subset\rr^3$ denote its two-dimensional eigensubspace corresponding to the eigenvalue 1. It contains $\La_X$ and it is distinct from $\Pi_{\pm}(X)$, by its invariance.  Hence, the form $<\mca x,x>$ is non-degenerate on $V$.

There exists a unique non-trivial linear involution $J:\rr^3\to\rr^3$ that preserves the quadratic form and fixes the points of the subspace $V$: it acts on its $\mca$-orthogonal one-dimensional subspace as the central symmetry $v\mapsto-v$. It also preserves the union of the subspaces $\Pi_{\pm}(X)$, by the above argument. 
The involution $J$  permutes the subspaces $\Pi_{\pm}(X)$, as in the arguments before Claim 2. 
Finally, the involutions $I$ and $J$ both fix the points of the two-dimensional subspace $V$ and permute $\Pi_{\pm}(X)$. 
Thus, their composition $I\circ J$ fixes $\Pi_{\pm}$, and it fixes each point in $V$. Therefore, $I\circ J=Id$, as in the  argument before Claim 2. 
Hence, $I=J$, and $I$ preserves the quadratic form. The claim is proved.
\end{proof}

Thus, the reflection from the curve $C$ at $X$ is given by an involution $J:\rr^3\to\rr^3$ preserving the quadratic form that has   eigenvalues 1, 1, -1. The double eigenvalue 1 corresponds to the subspace $H_C(X)$, and 
-1 corresponds to its $\mca$-orthogonal complement.  But the subspace $H_{\mcn}(X)$ is also invariant and different from $H_C(X)$. Therefore,  the eigenvalues there are 1 and -1. Hence, $H_{\mcn}(X)$ contains the above $\mca$-orthogonal 
complement to $H_C(X)$, and hence, is $\mca$-orthogonal to $H_C(X)$. Claim 1 is proved.
\end{proof}

Claim 1 implies Statements 1) and 2) of Proposition \ref{ppencil}. 

Let us prove Statement 3). Consider the dual pencil defining the projective billiard on $C$. 
Recall that every real symmetric 3$\times$3-matrix $\mca$ defining a conic of the dual pencil 
 satisfies Statement 1). 

Case 1): the above matrix $\mca$ can be chosen so that the quadratic form $<\mca x,x>$ is sign-definite. 
We consider that it is positive definite, multiplying it by $\pm1$. 
 Then we can achieve that $\mca=Id$, applying a real linear change of coordinates to the quadratic form. 
 In the new coordinates, denoted by $\wt x_1,\wt x_2,\wt x_3$,  for every $X\in C$ the line  $\mcn(X)$ is defined by the condition that the subspace $H_\mcn(X)$ is Euclidean-orthogonal to  $H_C(X)$. Consider the tautological projection of the unit sphere 
 $S^2$ in the Euclidean space $\rr^3_{\wt x_1, \wt x_2, \wt x_3}$ centeted at the origin to the projective plane $\rp^2$. 
Fix an arbitrary connected component $\wt C\subset S^2$ of  the projection preimage of the curve $C$. 
Then the normal line field on $\wt C$ is projected to 
 the line field $\mcn$ on $C$. Thus, the projective billiard on $C$ is the projection of the spherical billiard with reflections 
 from the curve $\wt C$. 
  
 Case 2): the quadratic form $<\mca x,x>$ cannot be chosen sign-definite. Let us choose a regular conic $\alpha\neq C$ from the dual pencil  so that the conic $C$ either intersects the convex domain  $W\subset\rp^2$ bounded by $\alpha$, or entirely lies there. Set $C':=C\cap W$. 
 Let $\mca$ be a real symmetric 3$\times$3-matrix defining $\alpha$, i.e., $\alpha=\alpha_\mca=\{<\mca x,x>=0\}$. 
 We can and will consider that $\det\mca<0$, multiplying $\mca$ 
 by $\pm1$.  Then we can and will consider that $\mca=\diag(1,1,-1)$, 
 applying a real linear change of coordinates. In the new coordinates, 
 let us denote them by $\wt x_1,\wt x_2,\wt x_3$, the conic $\alpha$ becomes the projectivized absolute of the surface 
 $\Sigma=\{ \wt x_1^2+\wt x_2^2-\wt x_3^2=-1\}\subset\rr^3$ of constant negative curvature. The curve $C'$ is the tautological 
 projection of a curve $\wt C'\subset\Sigma$. The projective billiard on $C'$ defined by the dual pencil coincides with the projection of the Riemannian billiard on $\wt C'$ in the metric of the surface $\Sigma$. Namely,  for every $\wt X\in\wt C'$, set 
 $X=\pi(\wt X)\in C'$, the line $\mcn(X)$ is the image under the differential $d\pi$ of the normal line to $\wt C'$ through $X$ in $T_{\wt X}\Sigma$. 
 This follows from Statement 1) rewritten in the new coordinates: the line $\mcn(X)$ is defined by the condition that in the above notations the two-dimensional subspaces $H_{\mcn}(X)$ and $H_C(X)$ are orthogonal with respect to the quadratic form 
 $\wt x_1^2+\wt x_2^2-\wt x_3^2$. Statement 3) is proved. The proof of Proposition \ref{ppencil} is complete. 
 
 \begin{remark} In the above arguments we realized the projective billiard on $C$, or on a finite union of its open arcs, as a billiard on a curve in a surface of {\it non-zero} constant curvature. Let us make a remark on how sometimes one can get the case of zero curvature, i.e., the case of planar billiard. Statement 1) of Proposition \ref{ppencil} holds for all matrices $\mca_\la=(U-\la A)^{-1}$ from (\ref{fordp}), which 
 define conics of the dual pencil. For certain values $\la=\la_0$ the corresponding matrix $U-\la_0 A$ may become degenerate, so that its inverse $\mca_{\la_0}$ is not well-defined. Suppose that $\operatorname{rank}(U-\la_0A)=1$, thus, two 
 eigenvalues of the matrix $U-\la A$ converge to zero, as $\la\to\la_0$. Suppose that their ratio converges to a non-zero limit. Then as $\la\to\la_0$, dividing the matrix $\mca_\la$ by 
 one of the latter eigenvalues, we get a matrix family converging to a nonzero limit matrix $\mca$. 
 The latter matrix $\mca$  also satisfies Statement 1) of Proposition \ref{ppencil}. It is  
 real symmetric and degenerate of rank two. Thus, it has two non-zero eigenvalues. In the special case, when they have the same sign, we can and will consider that $\mca=\diag(1,1,0)$, applying a linear change of coordinates and multiplying $\mca$ by 
 $\pm1$. In the new coordinates, denoted by $\wt x_1,\wt x_2,\wt x_3$,  the projective billiard on the 
 complement $C\setminus\{\wt x_3=0\}$ is the tautological projection of a planar billiard, as in the discussion above.
 \end{remark}
  
  \section{Acknowledgments}
  I am grateful to Misha Bialy, Andrey Mironov and Sergei Tabachnikov for helpful discussions.

 \end{document}